\documentclass[12pt, a4paper, twoside]{article}

\usepackage{exscale, amsmath, amssymb, amsthm, dsfont, amsfonts, latexsym, url}
\usepackage[curve,matrix,arrow,ps]{xy}
\usepackage[T1]{fontenc}
\usepackage[hypertex]{hyperref}

\newtheorem{definition}{Definition}[section]
\newtheorem{theorem}[definition]{Theorem}
\newtheorem{proposition}[definition]{Proposition}
\newtheorem{lemma}[definition]{Lemma}
\newtheorem{corollary}[definition]{Corollary}

\newtheorem{deflemma}[definition]{Definition-Lemma}

\newcommand{\nd}{\noindent}

\newcommand{\dC}{{\mathds C}}
\newcommand{\dQ}{{\mathds Q}}

\newcommand{\dN}{{\mathds N}}

\newcommand{\dR}{{\mathds R}}

\newcommand{\dZ}{{\mathds Z}}
\newcommand{\dP}{{\mathds P}}

\newcommand{\de}{{\mathds 1}}

\newcommand{\cA}{\mathcal{A}}

\newcommand{\cC}{\mathcal{C}}

\newcommand{\cE}{\mathcal{E}}

\newcommand{\cG}{\mathcal{G}}
\newcommand{\cH}{\mathcal{H}}

\newcommand{\cK}{\mathcal{K}}
\newcommand{\cL}{\mathcal{L}}
\newcommand{\cM}{\mathcal{M}}
\newcommand{\cN}{\mathcal{N}}
\newcommand{\cO}{\mathcal{O}}

\newcommand{\cQ}{\mathcal{Q}}
\newcommand{\cR}{\mathcal{R}}

\newcommand{\cT}{\mathcal{T}}
\newcommand{\cU}{\mathcal{U}}
\newcommand{\cV}{\mathcal{V}}

\newcommand{\cbl}{{\check D_{BL}}}
\newcommand{\cblp}{{\check D_{BL}^{pp}}}
\newcommand{\TERP}{(H,H'_\dR,\nabla,P,w)}
\newcommand{\VB}{\textit{VB}}

\DeclareMathOperator{\Spec}{\textup{Spec}\,}
\DeclareMathOperator{\Sp}{\textup{Sp}}

\DeclareMathOperator{\Aut}{\textup{Aut}}
\DeclareMathOperator{\diag}{\textup{diag}}

\DeclareMathOperator{\Gr}{Gr}

\DeclareMathOperator{\id}{id}

\DeclareMathOperator{\rank}{rank}

\DeclareMathOperator{\tr}{\textup{Tr}}

\DeclareMathOperator{\GI}{{\cG}^\mathit{I}}
\DeclareMathOperator{\GII}{\cG^\mathit{II}}
\DeclareMathOperator{\GIII}{\cG^\mathit{III}}
\DeclareMathOperator{\GIV}{\cG^\mathit{IV}}
\DeclareMathOperator{\GIw}{\widetilde{\cG}^\mathit{I}}
\DeclareMathOperator{\GIIw}{\widetilde{\cG}^\mathit{II}}
\DeclareMathOperator{\GIIIw}{\widetilde{\cG}^\mathit{III}}
\DeclareMathOperator{\GIVw}{\widetilde{\cG}^\mathit{IV}}
\DeclareMathOperator{\GIh}{\cG^{\mathit{I},hor}}
\DeclareMathOperator{\GIIh}{\cG^{\mathit{II},hor}}
\DeclareMathOperator{\GIIIh}{\cG^{\mathit{III},hor}}
\DeclareMathOperator{\GIVh}{\cG^{\mathit{IV},hor}}
\DeclareMathOperator{\Ks}{{\mathcal{K}^\mathit{sp}}}
\DeclareMathOperator{\CanD}{\cC^\mathit{an}_\mathit{\check{D}^{pp}_{BL}}}
\DeclareMathOperator{\CanM}{\cC^{an}_\mathit{M}}
\DeclareMathOperator{\DcPHS}{\check{\mathit{D}}_{\mathit{PHS}}}
\DeclareMathOperator{\DPHS}{\mathit{D}_{\mathit{PHS}}}
\DeclareMathOperator{\DcPMHS}{\check{\mathit{D}}_{\mathit{PMHS}}}
\DeclareMathOperator{\DPMHS}{\mathit{D}_{\mathit{PMHS}}}
\DeclareMathOperator{\KS}{\mathit{KS}}

\setlength{\topmargin}{-1cm}
\setlength{\textheight}{23cm}
\setlength{\parindent}{0cm}
\setlength{\oddsidemargin}{-0.5cm}
\setlength{\evensidemargin}{-0.5cm}
\setlength{\textwidth}{17cm}

\begin{document}

\title{Curvature of classifying spaces for Brieskorn lattices}

\author{Claus Hertling\and Christian Sevenheck}

\date{\today}

\maketitle

\begin{abstract}
We study $tt^*$-geometry on the classifying space
for regular singular TERP-structures, e.g., Fourier-Laplace transformations
of Brieskorn lattices of isolated hypersurface singularities.
We show that (a part of) this classifying space can be canonically
equipped with a hermitian structure. We derive an estimate for
the holomorphic sectional curvature of this hermitian metric, which is
the analogue of a similar result for classifying spaces of
pure polarized Hodge structures.
\end{abstract}

\renewcommand{\thefootnote}{}
\footnote{2000 \emph{Mathematics Subject Classification.}
14D07, 32S30, 32S40, 53C07, 32G20.\\
Keywords: TERP-structures, twistor structures, classifying spaces,
$tt^*$ geometry, mixed Hodge structures, curvature, hyperbolicity. \\
C.H. acknowledges partial support by the ESF research grant MISGAM.
}

\section{Introduction}
\setcounter{equation}{0}
\label{secIntroduction}

In this paper, we study a generalization of variations of Hodge structures
and the associated period maps. These generalizations are called TERP-structures;
they first appeared under the name topological-antitopological fusion
(also called $tt^*$-geometry) in \cite{CV1,CV2,Du}
and were rigourously defined and studied in \cite{He4} and \cite{HS1}.

An important situation where TERP-structures naturally occur, is the
theory of ($\mu$-constant families) of isolated hypersurface singularities, and more specifically,
the Fourier-Laplace transformation of their Brieskorn lattices.
In this case the TERP-structures are regular singular. Irregular
TERP-structures arise by a similar though more general construction where
the initial object is a regular function on an affine variety. These
functions appear as mirror partners of the quantum cohomology algebra of
smooth projective varieties or more generally, orbifolds. It is a challenging
problem to study the induced TERP-structures on the quantum cohomology side,
although progress seems to have been made very recently in this direction (\cite{Ir}).
Let us notice that TERP-structures are intimately related to the theory
of harmonic bundles, via the so called twistor structures, i.e. (families of) holomorphic
bundles on $\dP^1$. Any TERP-structure gives rise to a twistor which is
called pure if it is a trivial bundle on $\dP^1$ and pure polarized if
a naturally defined hermitian metric on its space of global sections is
positive definite. This has to be seen as a generalization of the notion
of variations of (pure polarized) Hodge structures.
By a basic result of Simpson (\cite{Si5}), variations of pure polarized
twistor structures are equivalent to harmonic bundles on the parameter space.
Given a variation of TERP-structures on a complex manifold, one obtains
a variation of pure polarized twistor structures resp. a harmonic bundle
on an open subset of this manifold, which is a union of connected
components of the complement of a real analytic subvariety.
Notice also (\cite{Sa8}) that the TERP-structure of a tame function on an affine
manifold is always pure polarized.

The main topic of this paper are the classifying spaces that appear as
targets of period maps of variations of regular singular TERP-structures.
In fact, these spaces were already investigated under a different name (as classifying spaces of
Brieskorn lattices) in \cite{He2}. The main new point treated here
is to show how $tt^*$-geometry arises on the classifying spaces and to prove
the analogue of a crucial result in classical Hodge theory (see \cite{Sch}, \cite{GSch1},
\cite{GSch2} and \cite{De3}): the negativity of the sectional curvature of the
Hodge metric in horizontal directions. Similarly to the situation
in Hodge theory, we expect this result to be a cornerstone in
the study of the above mentioned period maps. We prove a few
quite direct consequences of our result at the end of this paper.

Let us give a short overview on the content of this article. In section \ref{secClassSpace}, we recall the
basic definitions both of variations of TERP-structures and of the classifying
spaces for Brieskorn lattices resp. regular singular TERP-structures. In order to do that,
we also recall the construction of the polarized mixed Hodge structure and its cohomological
invariants, the spectral numbers associated to a regular singular TERP-structure.
In section \ref{secTangent}, we construct a Kodaira-Spencer map from the tangent bundle of the
classifying space to some auxiliary bundle which gives a local trivialization
of the tangent bundle needed later. In particular, this induces a positive definite
hermitian metric on the pure polarized part of the classifying space.
We also consider the subsheaf of the tangent bundle of the classifying
space consisting of horizontal directions. Contrary to the case of
Hodge structures, it is not locally free in general. Finally, in section \ref{secHolSectCurv}
the main result of the paper is stated and proved. The proof is considerably more complicated
than in the case of Hodge structures as the classifying spaces of
TERP-structures/Brieskorn lattices are  not homogenous. We finish the paper by
deducing from our main theorem a rigidity result for variations of TERP-structures
on affine spaces.
\paragraph{Notations:} For a complex manifold $X$, we write $\cE\in\VB_X$ for a locally free
sheaf of $\cO_X$-modules $\cE$, the associated vector bundle is
denoted by $E$. If $E$ comes equipped with a flat
connection, we denote by $E^\nabla$ the corresponding local system.

\section{Classifying spaces}
\setcounter{equation}{0}
\label{secClassSpace}

In this section we introduce the classifying spaces of regular singular TERP-structures
which were considered, under a different name, in \cite{He2}. We start
by recalling very briefly the basic definition of a TERP-structure and
some of its associated data. After this, we give the definition
of the classifying spaces.

For the following basic definition we also refer to
\cite{He4} and \cite{HS1}.
\begin{definition}
Let $X$ be a complex manifold and $w$ an integer. A variation of TERP-structures
on $X$ of weight $w$ consists of a holomorphic vector bundle $H$ on $\dC\times X$, an integrable connection
$\nabla:\cH\rightarrow \cH\otimes\Omega^1_{\dC\times X}(*\{0\}\times X)$,
a flat real subbundle $H'_\dR$ of maximal rank of the restriction $H':=H_{|\dC^*\times X}$
and a flat non-degenerate $(-1)^w$-symmetric pairing $P:\cH'\otimes
j^*\cH' \rightarrow \cO_{\dC^*\times X}$, where
$j(z,t):=(-z,t)$, subject to the following conditions:
\begin{enumerate}
\item
$\nabla$ has a pole of type one (also called of Poincar\'e rank one) along $\{0\}\times X$,
i.e., the sheaf $\cH$ is stable under $z^2\nabla_z$ and $z\nabla_T$ for any
$T\in p^{-1}\cT_X$, where $p:\dC\times X\twoheadrightarrow X$.
\item
$P$ takes values in $i^w\dR$ on $H'_\dR$
\item
$P$ extends as a non-degenerate pairing $P:\cH\otimes j^*\cH \rightarrow z^w\cO_{\dC\times X}$, in
particular, it induces a non-degenerate symmetric pairing $[z^{-w}P]:\cH/z\cH\otimes\cH/z\cH
\rightarrow\cO_X$.
\end{enumerate}
$\TERP$ is called regular singular, if $(\cH,\nabla)$ is regular singular along
$\{0\}\times X$, i.e, if sections of $\cH$ have moderate growth along $\{0\}\times X$
compared to flat sections of $\cH'$.
\end{definition}
The case $X=\{pt\}$ is referred to as a single TERP-structure. There is a canonically
associated set of data, which we call ``topological''.
\begin{definition}\label{defTopDataTERP}
Let $\TERP$ be a TERP-structure, then we put
$$
H^\infty:=\{\textup{flat multivalued sections of }\cH'\}.
$$
We let $H^\infty_\dR$ be the subspace of real flat multivalued sections,
then $H^\infty_\dR$ comes equipped with the monodromy endomorphism $M\in\mathit{Aut}(H^\infty_\dR)$,
which decomposes as $M=M_s\cdot M_u$ into semi-simple and unipotent part. Let
$H^\infty:=\oplus H^\infty_\lambda$ be the decomposition into generalized eigenspaces with respect to $M$.

We restrict here to the case where all eigenvalues have absolute
value $1$, as this is automatically the case for \emph{mixed} TERP-structures,
as defined in definition \ref{defMixedTERP}. We
put $H^\infty_{\neq 1}:=\oplus_{\lambda \neq 1} H^\infty_\lambda$, so that
$H^\infty=H^\infty_1 \oplus H^\infty_{\neq 1}$, and let $N:=\log(M_u)$
be the nilpotent part of $M$.

$P$ induces a polarizing form $S$ on $H^\infty$ defined as follows: First note
that $P$ corresponds (after a counter-clockwise shift in the second
argument) to a pairing $L$ on the
local system $(H')^\nabla$, then given $A,B\in H^\infty$, we put
$S(A,B):= (-1)(2\pi i)^w L(A, t(B))$ where
$$
t(B)=
\left\{
\begin{array}{c}
(M-\mathit{Id})^{-1}(B) \;\;\;\;\;\; \forall B\in H^\infty_{\neq 1}\\ \\
-(\sum\limits_{k\geq1}\frac{1}{k!}N^{k-1})^{-1}(B)\;\;\;\;\;\; \forall B\in H^\infty_1.
\end{array}
\right.
$$
$S$ is nondegenerate, monodromy invariant, $(-1)^w$-symmetric on $H^\infty_1$, $(-1)^{w-1}$-symmetric
on $H^\infty_{\neq 1}$, and it takes real values on $H^\infty_\dR$ \cite[Lemma 7.6]{He4}. We call
the tuple $(H^\infty, H^\infty_\dR, M, S, w)$ the topological data of $\TERP$.
\end{definition}
Note that by \cite[lemma 5.1]{HS1} the topological data are equivalent to the
data $(H',H'_\dR, \nabla, P, w)$.

Let us now suppose that $\TERP$ is regular singular. Then the following classical objects
will play a key role in the sequel of this paper.
\begin{definition}\label{defHodgeFiltSpectrum}
Let $\TERP$ be a regular singular TERP-structure.
\begin{enumerate}
\item
Define for any $\alpha\in\dC$, $C^\alpha:=z^{\alpha\mathit{Id}-\frac{N}{2\pi i}}H^\infty_{e^{-2\pi i \alpha}}
\subset i_*(\cH')_0$ to be the space of elementary sections of $H'$ of order $\alpha$.
Let $V^{\alpha}$ (resp. $V^{>\alpha}$) the free $\cO_\dC$-module generated by elementary sections
of order at least (resp. strictly greater than) $\alpha$, i.e. $V^\alpha:=\sum_{\beta\geq\alpha} \cO_\dC C^\beta$
and $V^{>\alpha}:=\sum_{\beta>\alpha} \cO_\dC C^\beta$. The meromorphic bundle
(i.e., locally free $\cO_\dC[z^{-1}]$-module) $V^{>-\infty}$ is defined as $V^{>-\infty}:=\bigcup_\alpha V^\alpha$.
Any $V^\alpha$ and $V^{>\alpha}$ is a lattice inside $V^{>-\infty}$ and the decreasing
filtration $V^\bullet$ is called Kashiwara-Malgrange-filtration (or V-filtration) of $V^{>-\infty}$.

Notice that the objects $C^\alpha, V^\alpha, V^{>\alpha}$ and $V^{>-\infty}$ only depend
on the topological data of the TERP-structure, i.e. on $(H',H'_\dR, \nabla, P, w)$, but not
on the extension $H$ of the vector bundle $H'$ on $\dC^*$ to a vector bundle on $\dC$.
\item
The regularity assumption on $(H,\nabla)$ can be rephrased by saying that $\cH\subset V^{>-\infty}$.
The V--filtration induces a filtration on $\cH$, which is used to define a decreasing
filtration on the space $H^\infty$ in the following way.
Define for any $\alpha\in(0,1]+i\dR$
$$
F^pH^\infty_{e^{-2\pi i \alpha}}:=z^{p+1-w-\alpha+\frac{N}{2\pi i}}Gr_V^{\alpha+w-1-p}\cH,
$$
then $F^\bullet$ is a decreasing exhaustive filtration on $H^\infty$.
We will use a twisted version of this filtration, which is obtained as
$\widetilde{F}^\bullet:=G^{-1}F^\bullet$, where
$G:=\sum_{\alpha \in(0,1]+i\dR} G^{(\alpha)} \in
\Aut\left(H^\infty=\oplus_{\alpha}
H^\infty_{e^{-2\pi i\alpha}}\right) $ is defined as follows (see \cite[(7.47)]{He4}):
$$
G^{(\alpha)} := \sum_{k\geq 0}\frac{1}{k!}\Gamma^{(k)}(\alpha)
\left( \frac{-N}{2\pi i}\right)^k
=: \Gamma \left(\alpha\cdot id - \frac{N}{2\pi i}\right) .
$$
Here $\Gamma^{(k)}$ is the $k$-th derivative of the gamma function.
In particular, $G$
depends only on $H'$ and induces the identity on $\Gr^W_\bullet$
where $W_\bullet$ is the weight filtration of the nilpotent endomorphism $N$.
Note that the restriction of $W_\bullet(N)$ to $H^\infty_1$ is by definition centered around
$w$, and the restriction to $H^\infty_{\neq 1}$ is centered around $w-1$.

$\widetilde{F}^\bullet$ is the Hodge filtration of Steenbrink
if the TERP-structure is defined by an isolated hypersurface
singularity.

As a matter of notation, we also write
$\widetilde{F}^\bullet_\cH$ for the filtration $\widetilde{F}^\bullet$
on $H^\infty$ defined by $\cH$.
\item
The $V$-filtration is also used to define the spectrum of a regular
singular TERP-structure $\TERP$. Namely, let
$\Sp(H,\nabla) = \sum_{\alpha \in\dC}d(\alpha)\cdot\alpha\in\dZ[\dC]$
where
\begin{equation}\label{eqSpectrum}
d(\alpha):=\dim_\dC\left(\frac{Gr^\alpha_V \cH}{Gr^\alpha_V z\cH}\right)
=\dim_\dC\Gr_F^{\lfloor w-\alpha\rfloor} H^\infty_{e^{-2\pi i \alpha}}.
\end{equation}
It is a tuple of $\mu$ complex numbers $\alpha_1 \leq \ldots \leq \alpha_\mu$.
By definition, $d(\alpha)\neq 0$ only if $e^{-2\pi i \alpha}$ is an eigenvalue of $M$.
We have the symmetry property $\alpha_1 + \alpha_\mu = w$.
In most applications the eigenvalues of $M$ are roots of unity so that the spectrum
actually lies in $\dZ[\dQ]$.
\end{enumerate}
\end{definition}

The following notion is quoted from \cite{HS1},
where it is shown to correspond to ``nilpotent orbits'' of TERP-structures.
\begin{definition}\label{defMixedTERP}
A regular singular TERP-structure $\TERP$ of weight $w$ is called mixed if
the tuple
$$
(H^\infty_{\neq 1}, (H^\infty_{\neq 1})_\dR, -N, S, \widetilde{F}^\bullet)
\mbox{ resp. }
(H^\infty_1, (H^\infty_1)_\dR, -N, S, \widetilde{F}^\bullet)
$$
is a polarized mixed Hodge structure of weight $w-1$ resp. of weight $w$.
We refer to \cite{He4} or \cite{HS1} for the notion of
a polarized mixed Hodge structure (PMHS for short) used here. The data here are
$M_s$-invariant. In \cite[lemma 5.9]{HS1} it is shown that any semi-simple
automorphism of a PMHS has eigenvalues in $S^1$. This is compatible with our
assumptions in definition 2.2. that $M_s$ has all its eigenvalues in $S^1$ and
justifies this assumption.
\end{definition}

Next we reformulate the definitions of the classifying spaces
$D_\mathit{BL}$ resp.  $\DPMHS$ for Brieskorn lattices resp. PMHS from
\cite{He2} in terms of regular singular TERP-structures.
We start with a PMHS of one single weight $w$ with a semisimple automorphism.
As we have seen, a mixed TERP-structure defines
a sum of PMHS's of different weights on $H^\infty_1\oplus H^\infty_{\neq 1}$,
so that later we need a slight adjustment of this situation (this is done
in definition \ref{defClassSpaces}).

The next lemma gives an equivalence of conditions for a filtration
to induce a PMHS.
\begin{lemma}\label{lemCondInDcPMHS}
Let $(H^\infty,H^\infty_\dR,N,S,F^\bullet_0)$ be a PMHS of weight $w$
and let $M_s$ be a semisimple automorphism of it.
Then the eigenvalues of $M_s$ are elements of $S^1$.
Let $W_\bullet$ be the weight filtration centered at weight $w$ which is
induced by $N$. Let $P_{l}$ be the primitive subspace
$P_l:=\ker (N^{l-w+1}:\Gr^W_{l}\to \Gr^W_{2w-l-2})$ of $\Gr^W_l$
(for $l\geq w$)
and let $G_\dC$ be the group $G_\dC:=\Aut(H^\infty,N,S,M_s)$.
The primitive subspace $P_l$ decomposes into the eigenspaces of $M_s$,
$P_l=\bigoplus_\lambda P_{l,\lambda}$.
Then for any
$M_s$-invariant filtration $F^\bullet$ on $H^\infty$,
the following conditions are equivalent.
\begin{enumerate}
\item
$\dim F^pP_{l,\lambda}=\dim F^p_0P_{l,\lambda},
\ N(F^p)\subset F^{p-1}$, $F^pN^jP_l=N^jF^{p+j}P_l$,\\
$F^p\Gr^W_l=\bigoplus_{j\geq 0} F^pN^jP_{l+2j}$, $S(F^p,F^{w+1-p})=0$.
\item
There exists an $M_s$-invariant
common splitting $\widetilde{I}^{p,q}$ of $F^\bullet$ and
$W_\bullet$ with the properties in lemma 2.3 (a)--(d) in \cite{He2}.
\item
$F^\bullet$ is the image of $F^\bullet_0$ by an element of $G_\dC$.
\item
$\dim F^pP_{l,\lambda}=\dim F^p_0P_{l,\lambda},\ S(F^p,F^{w+1-p})=0$,
and all powers of $N$ are strict with respect
to $F^\bullet$.
\end{enumerate}
\end{lemma}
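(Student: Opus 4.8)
The plan is to prove the four conditions equivalent by establishing the cycle $(3)\Rightarrow(2)\Rightarrow(1)\Rightarrow(4)\Rightarrow(3)$, exploiting that the abstract framework of polarized mixed Hodge structures with a semisimple automorphism is governed by the Deligne bigrading. First I would dispose of the preliminary assertion: the eigenvalues of $M_s$ lie in $S^1$ because $M_s$ is a semisimple automorphism of a PMHS, which is exactly the content of \cite[lemma 5.9]{HS1} cited in definition \ref{defMixedTERP}; I will just invoke it. Then I fix, once and for all, the Deligne splitting $I^{p,q}_0$ attached to $(F^\bullet_0,W_\bullet)$, refined by the $M_s$-eigenspace decomposition into the $M_s$-invariant pieces $\widetilde I^{p,q}_0$ as in \cite[lemma 2.3]{He2}; the numbers $\dim F^p_0 P_{l,\lambda}$ are then read off from this bigrading, and the primitive decomposition $F^p\Gr^W_l=\bigoplus_{j\ge 0}N^j F^{p+j}P_{l+2j}$ is the standard Lefschetz decomposition compatible with a PMHS.

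For $(3)\Rightarrow(2)$: if $F^\bullet=gF^\bullet_0$ for $g\in G_\dC=\Aut(H^\infty,N,S,M_s)$, then $\widetilde I^{p,q}:=g\,\widetilde I^{p,q}_0$ is visibly an $M_s$-invariant common splitting of $F^\bullet$ and $W_\bullet$ — note $g$ preserves $W_\bullet$ since it commutes with $N$ — and it satisfies the properties (a)--(d) of \cite[lemma 2.3]{He2} because those properties ($N(\widetilde I^{p,q})\subset \widetilde I^{p-1,q-1}$, the $S$-orthogonality relations, the $M_s$-compatibility) are all preserved under the $G_\dC$-action. For $(2)\Rightarrow(1)$: given such a splitting, all five assertions in (1) are formal consequences. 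Indeed $\dim F^pP_{l,\lambda}=\sum_{p'\ge p}\dim(\widetilde I^{p',l-p'}\cap P_{l,\lambda})$ equals the corresponding sum for $\widetilde I_0$ because a splitting with properties (a)--(d) has the same primitive-graded dimensions as the one attached to $F_0^\bullet$ (both compute the Hodge numbers of the PMHS, which are intrinsic); $N(F^p)\subset F^{p-1}$ and $F^pN^jP_l=N^jF^{p+j}P_l$ and the primitive decomposition of $F^p\Gr^W_l$ follow by applying $N$ to the bigrading and using that $N:\widetilde I^{p,q}\to\widetilde I^{p-1,q-1}$ is part of a $\mathfrak{sl}_2$-action; and $S(F^p,F^{w+1-p})=0$ is the orthogonality of the splitting summed up. For $(1)\Rightarrow(4)$: the only new point is strictness of all powers of $N$ with respect to $F^\bullet$, and this is immediate from the identity $F^pN^jP_l=N^jF^{p+j}P_l$ together with $F^p\Gr^W_l=\bigoplus_j N^j F^{p+j}P_{l+2j}$, which together force $N^j(F^p\cap\text{(anything in its source filtration level)})=N^j F^p\cap F^{p-j}$ on the associated graded, and one lifts strictness from $\Gr^W$ to $H^\infty$ by a standard filtered-module argument since $N$ is strictly compatible with $W_\bullet$ by the $\mathfrak{sl}_2$-theory.

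The main obstacle is the implication $(4)\Rightarrow(3)$, which is the genuine content of the lemma: from the numerical data $\dim F^pP_{l,\lambda}=\dim F^p_0 P_{l,\lambda}$, the relation $S(F^p,F^{w+1-p})=0$, and strictness of all $N^j$, one must manufacture an element $g\in G_\dC$ carrying $F^\bullet_0$ to $F^\bullet$. My approach is: strictness of the powers of $N$ with respect to $F^\bullet$ lets me split off the Lefschetz structure and reduce to the primitive parts $P_{l,\lambda}$, on each of which $F^\bullet$ induces a pure Hodge filtration of the expected weight (using the $S$-orthogonality to get the pure polarization condition); the equality of dimensions $\dim F^pP_{l,\lambda}=\dim F^p_0P_{l,\lambda}$ then says $F^\bullet$ and $F^\bullet_0$ lie in the same $GL$-orbit on each primitive piece, and one checks the orbit element can be chosen to respect the polarization form $S$ and the $M_s$-action (i.e. lands in the relevant classical group $\mathrm{Aut}(P_{l,\lambda},S)$). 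Finally these pieces are reassembled, via the $\mathfrak{sl}_2$-representation generated by $N$, into a single automorphism of $(H^\infty,N,S,M_s)$; this reassembly is where one must be careful that the choices on different primitive pieces are compatible, but the $\mathfrak{sl}_2$-decomposition $H^\infty=\bigoplus_{l,j}N^jP_{l+2j}$ is canonical enough that a block-by-block construction works. This step is essentially the ``transversality implies homogeneity'' principle for the PMHS classifying space, and I would expect to cite or adapt the corresponding argument from \cite{He2} (lemma 2.3 and its surrounding discussion) rather than redo the $\mathfrak{sl}_2$-bookkeeping from scratch.
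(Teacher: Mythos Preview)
Your cycle $(3)\Rightarrow(2)\Rightarrow(1)\Rightarrow(4)$ is fine and matches the paper's own stance: the paper declares that $2$ obviously implies $1$, $3$, $4$, and that the equivalence of $1$, $2$, $3$ is proved in \cite[Ch.~2]{He2}. Your arguments for these easy implications are adequate rearrangements of that.

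The gap is in $(4)\Rightarrow(3)$. Two problems. First, your plan to ``cite or adapt the corresponding argument from \cite{He2}'' does not work: the paper is explicit that \cite{He2} establishes only the equivalence of $1$, $2$, $3$, while the implication from $(4)$ is ``the only remaining point,'' is ``rather technical,'' and is in fact \emph{skipped} in the paper because characterization $(4)$ is never used later. So there is nothing to cite, and your sketch has to stand on its own --- which it does not yet. Second, your sketch contains a conceptual slip: you claim $F^\bullet$ induces ``a pure Hodge filtration of the expected weight'' on each $P_{l,\lambda}$ and invoke ``the pure polarization condition,'' but condition $(3)$ only places $F^\bullet$ in the $G_\dC$-orbit of $F^\bullet_0$, i.e.\ in the compact dual $\DcPMHS$, not in the open real locus $\DPMHS$. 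So $F^\bullet$ need not define a Hodge structure on $P_{l,\lambda}$ at all; it is merely an $S$-isotropic filtration with the prescribed dimensions. The honest task is to show that strictness of all $N^j$ alone forces $F^\bullet$ to be compatible with the primitive decomposition (in particular $F^p\Gr^W_l=\bigoplus_j N^jF^{p+j}P_{l+2j}$, which you assumed rather than derived), then to find elements of $\Aut(P_l,S_{l-w},M_s)$ on each primitive piece, and finally to reassemble these along the $\mathfrak{sl}_2$-action into a single $g\in G_\dC$ preserving $S$ on all of $H^\infty$, not just on the primitive summands. That reassembly, with the $S$-compatibility checked across different weight levels, is precisely the ``rather technical'' work the paper declines to do.
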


As to the proof, let us just remark that obviously 2. implies 1., 3., and 4.
The equivalence of 1., 2. and 3. is proved
in \cite[Ch. 2]{He2}. The only remaining point is that 4. implies 1.-3., which is
rather technical. As we will not use the characterization 4., it is skipped here.

Also the following is proved in \cite[chapter 2]{He2}.

\begin{proposition}\label{propClassSpacesOneWeight}
In the situation of lemma \ref{lemCondInDcPMHS}, consider the set
\begin{eqnarray*}
\DcPMHS(H^\infty,H^\infty_\dR,N,S,F^\bullet_0,M_s,w) :=
\big\{\textup{filtrations }F^\bullet H^\infty\ |\ F^\bullet H^\infty
\textup{ is }M_s\textup{-invariant}\\
\textup{and satisfies the equivalent conditions in lemma \ref{lemCondInDcPMHS}}\big\}.
\end{eqnarray*}
It is a complex homogeneous space on which $G_\dC$ acts transitively.
The set
\begin{eqnarray*}
\DPMHS(H^\infty,H^\infty_\dR,N,S,F^\bullet_0,M_s,w) :=
\big\{\textup{filtrations }F^\bullet H^\infty\ |\ F^\bullet H^\infty
\textup{ is }M_s\textup{-invariant,}\hspace*{2cm}\\
\dim F^pP_{l,\lambda}=\dim F^p_0P_{l,\lambda},
\ (H^\infty,H^\infty_\dR,N,S,F^\bullet,M_s)\textup{ is a PMHS of weight }w\big\}
\end{eqnarray*}
is an open submanifold of $\DcPMHS$ and a real homogeneous space with transitive
action by a certain real group lying in between $G_\dC$
and $G_\dR=\Aut(H^\infty_\dR,N,S,M_s)$.
It is a classifying space for the $M_s$-invariant PMHS with the same
discrete data as the reference PMHS defined by $F^\bullet_0$.

Consider for $l\geq w$ the pairing $S_{l-w}:=S(-,N^l-)$ on $\Gr^W_l$,
the primitive subspaces $P_l:=\mathit{Ker}(N^{l-w+1})\subset \Gr^W_l$.
They decompose as $P_l=\oplus_\lambda P_{l,\lambda}$ into
eigenspaces of $M_s$. Then we define for any $l\geq w$:
$$
\begin{array}{rcl}
\check{D}_l(H^\infty,H^\infty_\dR,N,S,F^\bullet_0,M_s,w)
 & := & \big\{\textup{filtrations }F^\bullet P_l \,|\,F^p P_l \textup{ is }M_s
\textup{-invariant}, \\
& & \dim F^p P_{l,\lambda}=
\dim F_0^p P_{l,\lambda},\, S_{l-w}(F^pP_l ,F^{l-p+1}P_l)=0\big\}, \\ \\
D_l(H^\infty,H^\infty_\dR,N,S,F^\bullet_0,M_s,w)
 & := & \left\{F^\bullet P_l \in \check{D}_l \,|\, F^\bullet P_l \textup{ gives
a PHS of weight }l\textup{ on }P_l \right\} ,\\ \\
\DcPHS(H^\infty,H^\infty_\dR,N,S,F^\bullet_0,M_s,w) & := & \prod\limits_{l\geq w} \check{D}_l(H^\infty,H^\infty_\dR,N,S,F^\bullet_0,M_s,w) , \\ \\
\DPHS(H^\infty,H^\infty_\dR,N,S,F^\bullet_0,M_s,w) & := & \prod\limits_{l\geq w} D_l(H^\infty,H^\infty_\dR,N,S,F^\bullet_0,M_s,w).
\end{array}
$$
$\DcPHS(H^\infty,H^\infty_\dR,N,S,F^\bullet_0,M_s,w)$
is a projective manifold and a complex homogenous space, $\DPHS(H^\infty,H^\infty_\dR,N,S,F^\bullet_0,M_s,w)$
is an open submanifold and a real homogenous space. The projection
$$
\begin{array}{rcl}
\pi_\mathit{PMHS}:\DcPMHS(H^\infty,H^\infty_\dR,N,S,F^\bullet_0,M_s,w) & \longrightarrow & \DcPHS(H^\infty,H^\infty_\dR,N,S,F^\bullet_0,M_s,w) \\ \\
F^\bullet & \longmapsto & \prod_{l\geq w} F^\bullet P_l
\end{array}
$$
is an affine fiber bundle with fibre isomorphic to $\dC^{N_\mathit{PMHS}}$ for
some $N_\mathit{PMHS}\in\dN\cup\{0\}$. The classifying space $\DPMHS$ is the
restriction of the total space of this bundle to $\DPHS$, in other words,
we have the following diagram of projections and inclusions
\begin{eqnarray}\label{diagClassSpaces1}
\begin{matrix}
\DcPMHS(H^\infty,H^\infty_\dR,N,S,F^\bullet_0,M_s,w) & \stackrel{\pi_\mathit{PMHS}}{\longrightarrow} & \DcPHS(H^\infty,H^\infty_\dR,N,S,F^\bullet_0,M_s,w)\\
\cup & & \cup \\
\DPMHS(H^\infty,H^\infty_\dR,N,S,F^\bullet_0,M_s,w) & \longrightarrow &  \DPHS(H^\infty,H^\infty_\dR,N,S,F^\bullet_0,M_s,w).
\end{matrix}
\end{eqnarray}
\end{proposition}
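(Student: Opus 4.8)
All of these statements are established in \cite[Ch.\ 2]{He2}; we only outline the strategy, which is to realise each of the four spaces as an orbit of an algebraic group inside a flag-type ambient variety and then to match $\pi_\mathit{PMHS}$ with the product structure. For the pure spaces, first I would fix the discrete invariants $f^p_{l,\lambda}:=\dim F^p_0P_{l,\lambda}$ and, for each $l\geq w$, describe $\check D_l$ as the locus inside a product of Grassmannians of $P_l$ of the $M_s$-invariant decreasing filtrations $F^\bullet P_l$ of those dimensions satisfying $S_{l-w}(F^pP_l,F^{l-p+1}P_l)=0$. Being cut out by the $M_s$-isotypic condition and the $S_{l-w}$-orthogonality, this is a closed submanifold of the product, hence projective, and the standard parabolic computation from the pure Hodge case (compare \cite{Sch}) shows that $\Aut(P_l,S_{l-w},M_s|_{P_l})$ acts transitively on it; taking the product over $l\geq w$ then exhibits $\DcPHS$ as a homogeneous projective manifold. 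The subset $\DPHS$ where, in addition, each $F^\bullet P_l$ polarises a pure Hodge structure of weight $l$ is defined by the open Hodge--Riemann positivity inequalities, hence open, and is an orbit of the appropriate real form.

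For the mixed spaces, the equivalence of conditions 1.\ and 3.\ in Lemma \ref{lemCondInDcPMHS} says that $\DcPMHS$ is exactly the $G_\dC$-orbit of $F^\bullet_0$ inside the space of $M_s$-invariant filtrations on $H^\infty$ with the prescribed primitive dimensions; this gives transitivity of $G_\dC$ at once and presents $\DcPMHS$ as $G_\dC/\Stab_{G_\dC}(F^\bullet_0)$, a complex homogeneous manifold. As before, the further requirement that $(H^\infty,H^\infty_\dR,N,S,F^\bullet,M_s)$ be a PMHS amounts, on top of the closed conditions already present, to open positivity inequalities, so $\DPMHS$ is open in $\DcPMHS$, and the real group sitting between $G_\dR$ and $G_\dC$ that is described in \cite[Ch.\ 2]{He2} acts transitively on it.

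For the fibration, the relations in Lemma \ref{lemCondInDcPMHS}(1), notably $F^pN^jP_l=N^jF^{p+j}P_l$ and $F^p\Gr^W_l=\bigoplus_{j\geq0}F^pN^jP_{l+2j}$, show that every $F^\bullet\in\DcPMHS$ induces well-defined filtrations on the primitive parts $P_l$ which in turn determine $F^\bullet\Gr^W_\bullet$; hence $\pi_\mathit{PMHS}$ is well defined with image in $\DcPHS$, and it is $G_\dC$-equivariant because elements of $G_\dC$ commute with $N$ and preserve $W_\bullet$. To understand a fibre over $y\in\DcPHS$ I would choose, locally in $y$, an $M_s$- and $N$-equivariant, $S$-compatible splitting of $W_\bullet$ and let $\widehat F^\bullet$ be the resulting split filtration in $\pi_\mathit{PMHS}^{-1}(y)$; since any other $F^\bullet$ in the fibre induces the same filtration on $\Gr^W$, it is of the form $u\cdot\widehat F^\bullet$ for some $u$ in the unipotent group $U=\exp(\fn)$, $\fn=\{X\in\Lie G_\dC\mid X(W_k)\subseteq W_{k-1}\text{ for all }k\}$, uniquely determined modulo $U_0:=\Stab_U(\widehat F^\bullet)$, so the fibre is the homogeneous space $U/U_0$, which for unipotent groups over $\dC$ is biholomorphic to some $\dC^{N_\mathit{PMHS}}$. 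Letting $y$ vary and tracking the change of splitting, which alters $\widehat F^\bullet$ by a holomorphically varying left translation, one obtains that $\pi_\mathit{PMHS}$ is an affine fibre bundle; and since the positivity conditions of a PMHS depend only on the polarised pure Hodge structures induced on the $P_l$, i.e.\ only on $\pi_\mathit{PMHS}(F^\bullet)$, one gets $\DPMHS=\pi_\mathit{PMHS}^{-1}(\DPHS)$, which is diagram \eqref{diagClassSpaces1}.

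The hard part is the affine structure in the last step: one must show not merely that the fibres are smooth of the expected dimension but that they are genuinely \emph{affine} spaces and that the change-of-splitting maps act on them by \emph{affine} transformations (so that $\pi_\mathit{PMHS}$ is an affine, and not just a holomorphic, fibre bundle). This rests on the explicit description of the $M_s$-invariant common splittings $\widetilde I^{p,q}$ of $F^\bullet$ and $W_\bullet$ from \cite[Lemma 2.3]{He2}, by which the conditions defining $\DcPMHS$ become linear in the exponential coordinate once the associated-graded data have been fixed.
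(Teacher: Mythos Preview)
Your sketch is correct and follows exactly the approach of \cite[Ch.~2]{He2}; note that the present paper itself gives no proof of this proposition at all, merely stating that it ``is proved in \cite[chapter 2]{He2}.'' Your outline---realising $\DcPMHS$ as the $G_\dC$-orbit of $F^\bullet_0$ via condition~3 of Lemma~\ref{lemCondInDcPMHS}, identifying the fibres of $\pi_\mathit{PMHS}$ with quotients of unipotent groups using a Deligne-type splitting $\widetilde I^{p,q}$, and reducing the PMHS condition to the primitive polarisation conditions---is precisely the strategy of that reference, so there is nothing to compare.
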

The next definition introduces the main objects of this paper, namely, the classifying
spaces of regular singular TERP-structures. The most direct way to fix the data needed
to define these spaces is to consider a reference TERP-structure $(H^{(0)},
H'_\dR, \nabla, P, w)$, which is supposed to be mixed.
This defines the set of discrete data needed, among them are the topological data of $(H^{(0)},
H'_\dR, \nabla, P, w)$ as well as its spectral numbers.
\begin{definition}\label{defClassSpaces}
Let $(H^{(0)},H'_\dR,\nabla,P,w)$ be a regular singular mixed TERP-structure.
Consider
its topological data $(H^\infty, H_\dR^\infty, M, S, w)$ as defined in definition
\ref{defTopDataTERP}. We also have the filtration $\widetilde{F}^\bullet_0:=\widetilde{F}^\bullet_{\cH^{(0)}}$ and
the spectral numbers of $\cH^{(0)}$ as in definition \ref{defHodgeFiltSpectrum}.
Then, using proposition \ref{propClassSpacesOneWeight}
we define
\begin{equation}
\begin{array}{l}
\DcPMHS :=  \DcPMHS(H^\infty_1,(H^\infty_\dR)_1,-N,S,\widetilde{F}^\bullet_0,\id,w)
\\ \\ \quad \quad \quad \quad \times \DcPMHS(H^\infty_{\neq 1},(H^\infty_\dR)_{\neq 1},-N,S,\widetilde{F}^\bullet_0,M_s,w-1)
\end{array}
\end{equation}
and similarly $\DPMHS$, $\DcPHS$, $\DPHS$.

Write, as before, $H':=H^{(0)}_{|\dC^*}$. Then we are interested in
all possible extensions of $H'$ to a vector bundle $H$ on $\dC$ making
$\TERP$ into a TERP-structure and such that the associated filtration
is an element in the space $\DcPMHS$ just defined. This leads to the following definition.
\begin{eqnarray}
\cbl & := & \big\{\textup{free }\cO_{\dC,0}\textup{-submodules }\cH_0
\textup{ of }V^{>-\infty}_0\,|\, (z^2\nabla_z)\cH_0\subset\cH_0, \\
\nonumber & & \;P(\cH_0,\cH_0)=z^w\cO_\dC,\widetilde{F}^\bullet_\cH\in\DcPMHS\big\} \\ \nonumber \\
D_\mathit{BL} & := & \left\{\cH_0\in\cbl\,|\,\widetilde{F}^\bullet_\cH\in\DPMHS\right\}
\end{eqnarray}
\end{definition}
Notice first that as any element $\cH_0$ in $\cbl$ defines an extension of
the fixed (flat) bundle $H'$ on $\dC^*$, one may rephrase the definition
of $\cbl$ by saying that its elements are  the bundles $H$ on $\dC$
extending $H'$ such that
$\TERP$ is a regular singular TERP-structure and such that the associated filtration
$\widetilde{F}^\bullet_\cH$ lies in the same classifying space as $F^\bullet_{\cH^{(0)}}$.
Similarly, $D_\mathit{BL}$ consists of bundles $H$ such that $\TERP$ is a regular
singular mixed TERP-structure with $\widetilde{F}^\bullet_{\cH^{(0)}}\in\DPMHS$.

We already remarked that $\DcPMHS, \DPMHS, \DcPHS, \DPHS$ are complex manifolds
and complex resp. real homogenous spaces. A priori, the above definition
describes $\cbl$ resp. $D_\mathit{BL}$ only as a set with no obvious topological or
analytical structure. However, one of the main results of \cite{He2} (see also
\cite[proof of theorem 12.8]{He3}) is that $\cbl$ has a natural structure of a complex
manifold and that the projection $\pi_\mathit{BL}:\cbl\rightarrow \DcPMHS$,
$\cH\mapsto\widetilde{F}^\bullet_{\cH}$ is a affine
fibre bundle over $\DcPMHS$ with fibres isomorphic to $\dC^{N_\mathit{BL}}$ for
some $N_\mathit{BL}\in\dN\cup\{0\}$.
Moreover $D_\mathit{BL}$ is
the restriction of the map $\pi_\mathit{BL}$ to $\DPMHS$. Notice that
contrary to $\DcPMHS$, $\DPMHS$, $\DcPHS$ and $\DPHS$,
the spaces $\cbl$ and $D_\mathit{BL}$ are not homogenous. However,
there is a good $\dC^*$-action on the fibers of
$\pi_\mathit{BL}$, the corresponding zero section
$\DcPMHS\hookrightarrow \cbl$
consists of the regular singular TERP-structures in $\cbl$
which are generated by elementary sections, see \cite[theorem 5.6]{He2}.

Notice also that it follows from the definition of the space $\cbl$ that
all elements $\cH \in \cbl$ have the same spectral numbers, namely those
of $\cH^{(0)}$. The reason for this is that the spectrum is determined by the
topological data (more precisely, by the eigenvalues of $M$) and the filtration
$\widetilde{F}^\bullet_\cH$ (namely, by its Hodge numbers, i.e.,
the dimensions $\dim\Gr^{\lfloor w-\alpha \rfloor}_F H^\infty_{e^{-2\pi i \alpha}}$,
see formula \eqref{eqSpectrum}). Moreover, the definition of
the space $\DcPMHS$ (condition 1. in lemma \ref{lemCondInDcPMHS})
shows that these numbers are constant for all $F^\bullet\in\DcPMHS$, namely they are equal to those of
$\widetilde{F}^\bullet_{\cH^{(0)}}$. We will denote all along this article
these spectral numbers by $\alpha_1, \ldots, \alpha_\mu$, where $\mu:=\dim_\dC(H^\infty)$.

The following diagram completes diagram \eqref{diagClassSpaces1} and visualizes
how all of the above defined manifolds are interrelated.
\begin{eqnarray}\label{diagClassSpaces2}
\begin{matrix}\cbl & \stackrel{\pi_\mathit{BL}}{\longrightarrow} & \DcPMHS &
\stackrel{\pi_\mathit{PMHS}}{\longrightarrow} & \DcPHS\\
\cup & & \cup & & \cup \\
D_\mathit{BL} & \longrightarrow & \DPMHS &
\longrightarrow &  \DPHS.
\end{matrix}
\end{eqnarray}

\section{The tangent bundle and horizontal directions}
\setcounter{equation}{0}
\label{secTangent}

This section gives a concrete description of the tangent bundle
of the manifold $\cbl$ using a Kodaira-Spencer map. This description
is used in an essential way in the curvature calculation in the next section.

Consider the flat bundle $H'\in \VB_{\dC^*}$ (with real structure) and
the pairing $P:\cH'\otimes j^*\cH'\rightarrow\cO_{\dC^*}$ which correspond to
$(H^\infty, H_\dR^\infty, S, M, w)$ by \cite[lemma 5.1]{HS1}. By abuse of notation,
we will denote by $H'$ (resp. $\cH'$ for its sheaf of sections) the pullback
of this bundle by the projection $\pi':\dC^*\times \cbl \rightarrow \dC^*$. This bundle
comes equipped with an integrable connection, which is the pullback of the original
connection of $H'\in\VB_{\dC^*}$.
Similarly, $P$ pulls back to a pairing $(\pi')^*P:\cH'\otimes j^*\cH' \rightarrow \cO_{\dC^*\times \cbl}$,
which we denote, by abuse of notation, again by $P$.
We also consider the pullbacks of the Deligne lattices $V^\alpha$ and $V^{>\alpha}$
and of the meromorphic bundle $V^{>-\infty}$ under the projection
$\pi:\dC\times\cbl\rightarrow\cbl$. We write
$\cV^\alpha$, $\cV^{>\alpha}$ and $\cV^{>-\infty}$ for the pull-backs
$\pi^*V^\alpha$, $\pi^*\cV^{>\alpha}$ and $\pi^*\cV^{>-\infty}$, respectively.
All of them are extensions of $\cH'$ to $\dC\times\cbl$, i.e.,
we have $\cV^\alpha,\cV^{>\alpha},\cV^{>-\infty}\subset i_*\cH'$, where
$i:\dC^*\times\cbl\hookrightarrow \dC\times\cbl$ is the inclusion.
Notice however that $\cV^\alpha$ and $\cV^{>\alpha}$ are $\cO_{\dC\times\cbl}$-locally
free, whereas $\cV^{>-\infty}$ is only $\cO_{\dC\times\cbl}[z^{-1}]$-locally free.
By definition, we have a connection operator
$$
\nabla:\cV^\alpha\longrightarrow\cV^\alpha\otimes\Omega^1_{\dC\times\cbl}(\log\,\{0\}\times\cbl),
$$
and similarly for $\cV^{>\alpha}$.

From the very definition of the space $\cbl$, we see that there is another naturally defined
extension of $\cH'$ to $\dC\times\cbl$, which we call $\cL$. It is the universal family of Brieskorn
lattices, i.e., $\cL_{|\dC\times\{t\}} = \cH(t)$ for any $t\in\cbl$.
It can equally be described by gluing the locally defined bundles over $\dC\times U$, $U\subset \cbl$, given by the bases
constructed in \cite[lemma 5.2 to theorem 5.6]{He2}. The pairing
$P$ has the property that $P(\cL,\cL)\subset z^w\cO_{\dC\times\cbl}$ and that $z^{-w}P$ defines a non-degenerate
symmetric pairing on $\cL/z\cL$. In particular, the original $P$ on $\cH'$ takes values in
$\cO_{\dC\times\cbl}[z^{-1}]$ when restricted to $\cV^{>-\infty}$.

By definition, $\cL$ comes equipped with a connection
$$
\nabla:\cL\rightarrow\cL\otimes
\left(z^{-2}\Omega^1_{\dC\times\cbl/\cbl}
\oplus
\Omega^1_{\dC\times\cbl/\dC}(*\{0\}\times\cbl)\right).
$$
Using the Deligne extensions $\cV^\alpha$, we can give a precise statement
on the pole order of $\nabla$ on $\cL$.
Define $n:=[\alpha_\mu-\alpha_1]$, note that if $n=0$ then the classifying space $\cbl$ consists
of a single element only, namely, the lattice $V^{\alpha_1}$.
\begin{lemma}
\label{lemPoleOrder-n}
Suppose that $n>0$. Then $\cL$ is stable under $z^n\nabla_X$ for any $X\in p^{-1}\cT_\cbl$,
i.e., we have a connection operator
$$
\nabla:\cL\longrightarrow\cL\otimes
\left(
z^{-2}\Omega^1_{\dC\times\cbl/\cbl}
\oplus
z^{-n}\Omega^1_{\dC\times\cbl/\dC}
\right)
$$
\end{lemma}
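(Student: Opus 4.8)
The plan is to work locally on $\cbl$ and compare the universal family $\cL$ with the pulled-back Deligne lattices $\cV^\alpha$ and $\cV^{>\alpha}$, using the fact that all elements of $\cbl$ have the same spectral numbers $\alpha_1,\ldots,\alpha_\mu$ lying in the interval $[\alpha_1,\alpha_\mu]$ of length $< n+1$. The key containment to establish is the pair of inclusions
\begin{equation}\label{eqSandwich}
\cV^{>\alpha_\mu}\ \subset\ \cL\ \subset\ \cV^{\alpha_1}
\end{equation}
of $\cO_{\dC\times\cbl}$-submodules of $i_*\cH'$, valid fibrewise by the description of the $V$-filtration in Definition \ref{defHodgeFiltSpectrum} (the lattice $\cH(t)$ sits between the Deligne lattices determined by its smallest and largest spectral numbers), and hence valid over $\dC\times\cbl$ because the construction is compatible with the local trivializations of \cite{He2}. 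The point of \eqref{eqSandwich} is that $\cV^{>\alpha_\mu}$ and $\cV^{\alpha_1}$ differ by multiplication by $z^{-n}$ up to a $z$-power that does not matter: concretely $z^{n}\cV^{\alpha_1}\subset\cV^{\alpha_1+n}\subset\cV^{>\alpha_\mu}$, using $\alpha_1+n > \alpha_\mu$ when $n=[\alpha_\mu-\alpha_1]$ is not an integer, and $\alpha_1+n\geq \alpha_\mu$ with the boundary case handled by the strict inequality $d$-count, so in fact $z^n\cV^{\alpha_1}\subseteq\cV^{>\alpha_\mu}$ in all cases.

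Granting \eqref{eqSandwich}, I would argue as follows. Let $X\in p^{-1}\cT_\cbl$ be a vector field in the base direction. The connection $\nabla$ preserves each Deligne lattice with only a logarithmic pole along $\{0\}\times\cbl$ in the base directions — indeed $\nabla_X$ maps $\cV^\alpha$ to $\cV^\alpha\otimes\Omega^1_{\dC\times\cbl}(\log\{0\}\times\cbl)$, as recalled in the text just before the lemma — so in particular $\nabla_X\cV^{\alpha_1}\subset z^{-1}\cV^{\alpha_1}$ (absorbing the $\log z$ into a simple pole is harmless, or one can note $\nabla_X$ actually has no pole at all in the $X$-direction on $\cV^{\alpha_1}$ since $X$ is not the $z$-direction; either way the estimate below only improves). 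Then for a local section $s$ of $\cL$, using the left inclusion of \eqref{eqSandwich} we have $s\in\cV^{\alpha_1}$, hence
$$
z^n\nabla_X s\ \in\ z^n\cdot z^{-1}\cV^{\alpha_1}\ \subset\ z^{n-1}\cV^{\alpha_1}\ \subset\ z^{-1}\cdot z^{n}\cV^{\alpha_1}\ \subset\ \cV^{>\alpha_\mu}\ \subset\ \cL,
$$
where the containment $z^{n}\cV^{\alpha_1}\subset\cV^{>\alpha_\mu}$ is the computation above and the final inclusion is the right inclusion of \eqref{eqSandwich}. (If one prefers the sharper no-pole statement for $\nabla_X$ on $\cV^{\alpha_1}$, one gets $z^n\nabla_X s\in z^n\cV^{\alpha_1}\subset\cV^{>\alpha_\mu}\subset\cL$ directly.) This is exactly the assertion that $\cL$ is stable under $z^n\nabla_X$, which translates into the claimed pole-order statement
$$
\nabla:\cL\longrightarrow\cL\otimes\left(z^{-2}\Omega^1_{\dC\times\cbl/\cbl}\oplus z^{-n}\Omega^1_{\dC\times\cbl/\dC}\right),
$$
the $z^{-2}$ summand in the fibre direction being already part of the hypothesis that $\TERP$ has a pole of type one along $\{0\}\times\cbl$.

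The main obstacle is establishing the sandwich \eqref{eqSandwich} cleanly over the whole of $\dC\times\cbl$ rather than just fibrewise, together with the careful bookkeeping of the boundary case where $\alpha_\mu-\alpha_1$ is an integer (so that $n=\alpha_\mu-\alpha_1$ and one must verify $z^n\cV^{\alpha_1}$ still lands in $\cV^{>\alpha_\mu}$ and not merely in $\cV^{\alpha_\mu}$). For the global statement I would rely on the description of $\cL$ by gluing the explicit bases of \cite[Lemma 5.2 to Theorem 5.6]{He2}: in those coordinates the generators of $\cL$ are expressed in terms of elementary sections $C^\beta$ with $\beta$ ranging over $[\alpha_1,\alpha_\mu]$ up to $\cO_{\dC,0}[z]$-linear (in fact polynomial in $z$) corrections, which makes both inclusions in \eqref{eqSandwich} manifest and base-uniform. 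The boundary bookkeeping is then a local check at a single point using the precise $d(\alpha)$-dimension count \eqref{eqSpectrum} together with condition 1 of Lemma \ref{lemCondInDcPMHS}, which pins down the Hodge numbers and hence forces the strict containment $C^{\alpha_\mu}\subset z\cL$ in the top graded piece.
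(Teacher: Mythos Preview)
Your overall strategy---sandwich $\cL$ between two Deligne lattices, use that $\nabla_X$ preserves the $V$-filtration in parameter directions, then shift by $z^n$---is exactly the paper's approach. But the specific sandwich you write down is off by one, and this breaks the argument.

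You claim $\cV^{>\alpha_\mu}\subset\cL\subset\cV^{\alpha_1}$ and then $z^n\cV^{\alpha_1}\subset\cV^{>\alpha_\mu}$, justified by ``$\alpha_1+n>\alpha_\mu$ when $n=[\alpha_\mu-\alpha_1]$ is not an integer.'' This inequality is backwards. With $n=\lfloor\alpha_\mu-\alpha_1\rfloor$ one always has $n\le\alpha_\mu-\alpha_1$, hence $\alpha_1+n\le\alpha_\mu$, with strict inequality precisely in the non-integer case. So $z^n\cV^{\alpha_1}=\cV^{\alpha_1+n}$ is \emph{never} contained in $\cV^{>\alpha_\mu}$ (for instance $\alpha_1=0$, $\alpha_\mu=2.5$, $n=2$ gives $\cV^2\not\subset\cV^{>2.5}$). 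The ``boundary case'' you worry about is in fact the \emph{best} case for your inequality, not the worst; and the proposed fix via $C^{\alpha_\mu}\subset z\cL$ is false, since $d(\alpha_\mu)>0$ forces some elementary section of order $\alpha_\mu$ to survive in $\cL/z\cL$.

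The remedy is to sharpen the lower bound of the sandwich to $\cV^{>\alpha_\mu-1}\subset\cL$, as the paper does. This holds because any $s\in V^{>\alpha_\mu-1}\setminus\cH$ would, after multiplication by the minimal $z^k$ with $z^ks\in\cH$, produce a spectral number larger than $\alpha_\mu$. With this bound in hand one has, for $s\in\cL\subset\cV^{\alpha_1}$,
\[
\nabla_X s\ \in\ \cV^{\alpha_1}\ \subset\ \cV^{>\alpha_\mu-1-n}\ =\ z^{-n}\cV^{>\alpha_\mu-1}\ \subset\ z^{-n}\cL,
\]
the middle inclusion now requiring only $\alpha_1>\alpha_\mu-1-n$, i.e.\ $n+1>\alpha_\mu-\alpha_1$, which is exactly the definition of the floor. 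No boundary case analysis is needed.
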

\begin{proof}
As explained before, any $\cH\in \cbl$ has
the spectral numbers $\alpha_1,\ldots,\alpha_\mu$. It follows
in particular hat $V^{\alpha_1} \supset \cH \supset V^{>\alpha_\mu-1}$.
The first inclusion is obvious, for the second, if we had any $s\in V^{>\alpha_\mu-1}$ which
is not a section of $\cH$, then there would be a $k\in\dN_{>0}$
with $z^k s \in \cH$ which implies (if we take a minimal such $k$) that the principal part of $z^k s$ does
not vanish in $Gr^V_\bullet(\cH/z\cH)$. In other words, we would get a spectral number larger then $\alpha_\mu$, which
is impossible. As these inclusions of lattices hold true at any point of $\cbl$, we have
$$
\cV^{\alpha_1}\supset\cL\supset\cV^{>\alpha_\mu-1}.
$$
Now let $s$ be any local section of $\cL$, then
$s\in \cV^{\alpha_1}$, so that
$$
\nabla_X(s)\subset \cV^{\alpha_1}
\subset \cV^{>-n+\alpha_\mu-1}=z^{-n}\cV^{>\alpha_\mu-1}\subset z^{-n}\cL.
$$
\end{proof}

The following lemma will be useful for the proof of lemma \ref{lemKS},
but it will also be used in the computations in the next
section.

\begin{lemma}\label{lemCanExtBase}
Consider a module $\cH\in \cbl$  and a local $\cO_\dC$-basis
$\underline{v}^{(0)}:=(v^{(0)}_1,\ldots,v^{(0)}_\mu)$ of $\cH$.
\begin{enumerate}
\item
There exists a small neighborhood $U_1\times U_2$ of
$(0,\cH)\in \dC\times\cbl$ and a unique basis $\underline{v}:=(v_1,\dots,v_\mu)$
of $\cL_{|U_1\times U_2}$ which is an extension of
$\underline{v}^{(0)}$ and which satisfies
\begin{eqnarray}\label{v=v_0}
\underline{v}=\underline{v}^{(0)}\cdot
\left(\de_\mu+\sum_{k=1}^n z^{-k} C_k\right)
\end{eqnarray}
where  $C_k\in M(\mu\times \mu,\cO_\cbl(U_2))$. Here $\underline{v}^{(0)}$ is extended to a section
in $\pi^{-1}V^{>-\infty}\subset\cV^{>-\infty}$, and this equation holds
in $\cV^{>-\infty}$, i.e., meromorphically along $z=0$.
\item
If $z^{-w}P((\underline{v}^{(0)})^{tr},\underline{v}^{(0)})$ is a constant $\mu\times\mu$-matrix
then so is $z^{-w}P((v)^{tr},v)$.
\item
If $\underline{v}^{(0)}$ is a good basis for $\cH$ in the
sense of \cite{SM} (i.e.,
if it projects to a basis of the vector space $\Gr^V_\bullet(\cH/z\cH)$)
then $\underline{v}$ is a good basis for $\cL_{|U_1\times U_2}$. Here
being a good basis for $\cL_{|U_1\times U_2}$ can be
expressed in two equivalent ways: Either we require that
for any $t\in U_2$, the restriction of $\underline{v}$ to
$U_1\times \{t\}$ is a good basis of the restriction $\cL_{|U_1\times\{t\}}$
or we ask that $\underline{v}$ projects to a $\cO_{U_2}$-basis of
$\Gr^\cV_\bullet(\cL/z\cL)$. These requirements are equivalent as the latter module
is locally free due to the fact that the spectral numbers of $\cL_{|\dC\times\{t\}}$ are the same for each $t\in U_2$.
\end{enumerate}
\end{lemma}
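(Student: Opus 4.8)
The plan is to prove (1), from which (2) and (3) follow by reading off the explicit normal form \eqref{v=v_0}.

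\emph{Part (1).} I would first introduce the constant extension $\cL^{c}\subset\cV^{>-\infty}$ of $\cH$, i.e.\ the $\cO_{\dC\times\cbl}$-submodule generated by the constant sections $\pi^{-1}v^{(0)}_1,\dots,\pi^{-1}v^{(0)}_\mu$; it is a lattice with $\cL^{c}_{|\dC\times\{t\}}=\cH$ for every $t$. By the proof of Lemma \ref{lemPoleOrder-n} we have $\cV^{>\alpha_\mu-1}\subset\cL\subset\cV^{\alpha_1}$, and $z^n\cV^{\alpha_1}\subset\cV^{>\alpha_\mu-1}$ because $n=[\alpha_\mu-\alpha_1]$; restricting the analogous inclusions at the reference point and pulling them back by $\pi$ gives the same bounds for $\cL^{c}$, so that $z^n\cV^{\alpha_1}\subset\cL,\ \cL^{c}\subset\cV^{\alpha_1}\subset z^{-n}\cL^{c}$. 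Next I would consider the free $\cO_\cbl$-module $\cP:=\bigoplus_{k=1}^{n}z^{-k}\cO_\cbl\underline v^{(0)}\subset z^{-n}\cL^{c}$ of rank $n\mu$ and the quotient $\cW:=z^{-n}\cL^{c}/\cL$. As $\cL$ and $z^{-n}\cL^{c}$ agree over $\dC^*\times\cbl$, $\cW$ is a coherent $\cO_{\dC\times\cbl}$-module supported on $\{0\}\times\cbl$, and a short computation — using that every $\cH(t)$ has the spectral numbers $\alpha_1,\dots,\alpha_\mu$, recalled before \eqref{diagClassSpaces2} — shows that its fibre $z^{-n}\cH/\cH(t)$ has dimension $n\mu$ for all $t$; by the manifold structure of $\cbl$ from \cite{He2}, $\cW$ is therefore $\cO_\cbl$-locally free of rank $n\mu$ near $\cH$, with fibre $z^{-n}\cH/\cH$ at $\cH$. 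The $\cO_\cbl$-linear composition $\phi\colon\cP\hookrightarrow z^{-n}\cL^{c}\twoheadrightarrow\cW$ is then a morphism of free modules of the same rank which, on fibres at $\cH$, sends the basis $\{z^{-k}v^{(0)}_j\}_{1\le k\le n,\,1\le j\le\mu}$ of $\cP\otimes\dC$ to the basis $\{z^{-k}v^{(0)}_j\bmod\cH\}$ of $z^{-n}\cH/\cH$, hence is an isomorphism in a neighbourhood $U_1\times U_2$ of $(0,\cH)$. I would then simply set $\underline v:=\underline v^{(0)}+\underline p$ (with $\underline v^{(0)}$ denoting its constant extension, as in \eqref{v=v_0}) where $\underline p:=-\phi^{-1}\bigl([\,\underline v^{(0)}\,]\bigr)\in\cP^{\mu}$ and $[\,\underline v^{(0)}\,]$ is the $\mu$-tuple of classes of $\pi^{-1}v^{(0)}_i$ in $\cW$. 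By construction $v_i\in\cL$; and since $v^{(0)}_i\in\cH$ its class vanishes in the fibre $\cW\otimes\dC$, so $\underline p\in\fm_\cH\cP^{\mu}$, whence $\underline v_{|\dC\times\{0\}}=\underline v^{(0)}$ and $\underline v=\underline v^{(0)}\bigl(\de_\mu+\sum_{k=1}^{n}z^{-k}C_k\bigr)$ with $C_k\in M(\mu\times\mu,\cO_\cbl(U_2))$ and $C_k(\cH)=0$. As $\underline v$ restricts to a basis of $\cH=\cL_{|\dC\times\{0\}}$, Nakayama's lemma shows it is an $\cO_{\dC\times\cbl}$-basis of $\cL_{|U_1\times U_2}$. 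Uniqueness is immediate: any competitor $\underline v'$ satisfies $\underline v'-\underline v^{(0)}\in\cP^{\mu}$ and $\phi(\underline v'-\underline v^{(0)})=-[\,\underline v^{(0)}\,]$, so $\underline v'=\underline v$ by injectivity of $\phi$.

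\emph{Part (2)} would follow from the transformation law $P(\underline v^{\,tr},\underline v)=A^{tr}\,P\bigl((\underline v^{(0)})^{tr},\underline v^{(0)}\bigr)(j^*A)$ with $A:=\de_\mu+\sum_{k=1}^{n}z^{-k}C_k$: if $z^{-w}P((\underline v^{(0)})^{tr},\underline v^{(0)})=P_0$ is constant, then $z^{-w}P(\underline v^{\,tr},\underline v)=A^{tr}P_0(j^*A)$, and since $A^{tr}-\de_\mu$ and $j^*A-\de_\mu=\sum_k(-1)^k z^{-k}C_k$ involve only the powers $z^{-1},\dots,z^{-n}$, the difference $A^{tr}P_0(j^*A)-P_0$ involves only strictly negative powers of $z$; but $z^{-w}P(\underline v^{\,tr},\underline v)$ is holomorphic along $z=0$ because $\underline v$ is a basis of $\cL$ and $P(\cL,\cL)\subset z^w\cO_{\dC\times\cbl}$, so $z^{-w}P(\underline v^{\,tr},\underline v)=P_0$. \emph{Part (3)} I would treat via the description of a good basis of $\cL_{|U_1\times U_2}$ by $\Gr^\cV_\bullet(\cL/z\cL)$, which is $\cO_{U_2}$-locally free of rank $\mu$ with fibre $\Gr^V_\bullet(\cH/z\cH)$ at $\cH$ (constancy of the spectral numbers). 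Endowing $\cP$, $z^{-n}\cL^{c}$, $\cL$ and $\cW$ with the order filtrations induced by $\cV^\bullet$, the fibre computation above also shows that $\Gr\phi$ is an isomorphism at $\cH$, so $\phi$ is a strict filtered isomorphism near $\cH$ and $\phi^{-1}$ is filtered. Since $v^{(0)}_i$ has order $\beta_i$, the class $[\,v^{(0)}_i\,]$ lies in the part of $\cW$ of order $\geq\beta_i$, hence $p_i$ lies in the part of $\cP$ of order $\geq\beta_i$; thus $v_i=v^{(0)}_i+p_i$ still has order $\beta_i$ and its principal symbol equals $[v^{(0)}_i]$ up to a term vanishing at $\cH$. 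Therefore the symbols $\sigma(v_1),\dots,\sigma(v_\mu)$ reduce modulo $\fm_\cH$ to the basis $\{[v^{(0)}_i]\}$ of $\Gr^V_\bullet(\cH/z\cH)$, and Nakayama's lemma makes them an $\cO_{U_2}$-basis of $\Gr^\cV_\bullet(\cL/z\cL)$, i.e.\ $\underline v$ is a good basis.

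The main obstacle is the local freeness over $\cO_\cbl$ of the sheaves $\cW$ and $\Gr^\cV_\bullet(\cL/z\cL)$ used above: this rests on the manifold structure of $\cbl$ established in \cite{He2} together with the constancy of the spectral numbers along $\cbl$, and once it is granted the rest is linear algebra over $\cO_\cbl$ and Nakayama's lemma. An alternative proof of (1) that avoids $\cW$ is to start from a local $\cO_{\dC\times\cbl}$-basis $\underline b$ of $\cL$ supplied by \cite{He2} with $\underline b_{|\dC\times\{0\}}=\underline v^{(0)}$, write $\underline b=\underline v^{(0)}B$ with $B\in\Gl(\mu,\cO_{\dC\times\cbl}[z^{-1}])$ and $B_{|\dC\times\{0\}}=\de_\mu$, split $B=B_{<0}+B_{\geq0}$ into its strictly negative and its non-negative parts in $z$ (so that $B_{\geq0}$ is invertible near $(0,\cH)$ while $B_{<0}$ vanishes at $\cH$), and solve the equation $(BQ)_{\geq0}=\de_\mu$ for $Q\in\Gl(\mu,\cO_{\dC\times\cbl})$ near $(0,\cH)$ by a contraction-mapping argument on a small polydisc; then $\underline v:=\underline b\,Q$ has the required form.
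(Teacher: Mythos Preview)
Your proof is correct, and all three parts take a genuinely different route from the paper's.

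For part (1), the paper proceeds via Birkhoff factorization: it picks an arbitrary holomorphic extension $\underline v'$ of $\underline v^{(0)}$ to a local basis of $\cL$, interprets the transition matrix $\Psi$ with $\underline v'=\underline v^{(0)}\Psi$ as a cocycle for a bundle on $\dP^1\times U_2$ which is fibrewise trivial near $\cH$, and uses the factorization $\Psi=\Psi_\infty\Psi_0^{-1}$ (with $\Psi_\infty$ polynomial in $z^{-1}$ and normalized to $\de_\mu$ at $z=\infty$) to set $\underline v:=\underline v^{(0)}\Psi_\infty$. The bound on the pole order of $\Psi_\infty$ then comes from the same inclusion $\cL\subset z^{-n}\cL^c$ that you use. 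Your approach bypasses Birkhoff entirely by reading the correction $\underline p$ directly out of the isomorphism $\cP\cong\cW$; this is more elementary and makes the uniqueness transparent, at the cost of having to check that $\cW$ is $\cO_\cbl$-locally free (which, as you note, follows from the smoothness of $\cbl$ and the constancy of the spectrum). Your ``alternative'' at the end is essentially the paper's Birkhoff argument in disguise: solving $(BQ)_{\geq 0}=\de_\mu$ for $Q$ holomorphic is exactly the factorization step.

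For part (3), the paper argues by contradiction on the entries of the matrices $C_k$: it shows directly that $(C_k)_{ij}=0$ whenever $\alpha_i-k<\alpha_j$, using the linear independence of the principal parts $z^m s(v^{(0)}_l,\alpha_l)$ and a minimality argument. Your filtered upgrade of part (1) is more conceptual; the one point to make fully explicit is that the strictness of $\phi$ \emph{near} $\cH$ (not just at $\cH$) also requires local freeness of $\Gr^\cV\cW$, which follows because $\Gr^\cV\cL\to\Gr^\cV(z^{-n}\cL^c)$ is fibrewise injective (both filtrations being induced from the ambient $\cV^\bullet$). Once that is said, your Nakayama step goes through. Part (2) is the same in both proofs.
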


\begin{proof}
\begin{enumerate}
\item
Consider a holomorphic extension $\underline{v}'=(v'_1,...,v'_\mu)$
of $\underline{v}^{(0)}$ in a suitable neighborhood
$\Delta_\varepsilon\times U'_2$ of $(0,\cH)$. The matrix
$\Psi$ with $\underline{v}'=\underline{v}^{(0)}\cdot \Psi$ is holomorphic and invertible on
$\Delta^*_\varepsilon \times U'_2$ and
defines a cocycle in
$H^1(\dP^1\times U'_2,
\mathit{GL}(\mu,\cO^*_{\dP^1\times U'_2}))$
and thus a vector bundle on $\dP^1\times U'_2$. Because the restriction
to $\dP^1\times \{\cH\}$ is trivial, the restriction to $\dP^1\times U_2$
for some $U_2\subset U_2'$ is a family of trivial vector bundles
on $\dP^1$. The Birkhoff factorization (see, e.g., \cite[proposition 4.1]{Mal2}) yields
unique matrices $\Psi_0\in
\Gamma(\Delta_\varepsilon\times U_2,\cO^*_{\Delta_\varepsilon\times U_2})$
and $\Psi_\infty\in \Gamma((\dP^1\backslash\{0\})\times U_2,
\cO^*_{(\dP^1-\{0\})\times U_2})$ with
$\Psi_\infty|_{\{\infty\}\times U_2}=\de_\mu$ and
$\Psi=\Psi_\infty\cdot \Psi_0^{-1}$. Consider
$$\underline{v}:=\underline{v}^{(0)}\cdot \Psi_\infty=\underline{v}'\cdot \Psi_0.$$
As in the proof of lemma \ref{lemPoleOrder-n}, we conclude
from
$$
\cL\subset\cV^{\alpha_1}\subset\cV^{>\alpha_\mu-1-n}=z^{-n}\cV^{>\alpha_\mu-1} \subset z^{-n}\cO_{\dC\times \cbl}\otimes_{\cO_\dC}\cH
$$
that $\cL \subset z^{-n}\cO_{\dC\times \cbl}\otimes_{\cO_\dC}\cH$. It follows that the matrix
$\Psi_\infty$ satisfies
$\Psi_\infty=\de_\mu+\sum_{k=1}^{n} z^{-k} C_{k}.$
Uniqueness is now also clear.
\item
This follows from $z^{-w}P(\cL,\cL) = \cR$ and
$z^{-w}P(v_i,v_j)-z^{-w}P(v_i^{(0)},v_j^{(0)})\in z^{-1} \cO_{U_2}[z^{-1}]$.
\item
First we introduce two notations: within $\cV^{>-\infty}$ and
$\cV^\alpha$ we consider the $\pi^{-1}\cO_\cbl$-module $\cC^\alpha$
consisting of elementary sections of order $\alpha$ on $\dC^*\times \cbl$.
Then any $v_i\in \cL_{|U_1\times U_2}$ can be written as a sum
$v_i = \sum_{\beta\geq \alpha_1} s(v_i,\beta)$ where
$s(v_i,\beta)\in \cC^\beta(\dC\times U_2)$.

That $\underline{v}^{(0)}$ is a good basis means that
$v_i^{(0)}=\sum_{\beta\geq\alpha_i} s(v_i^{(0)},\beta)$ and that
$$
\Gr^V_\alpha\cH=\bigoplus_{i,k:k\geq 0,\alpha_i+k=\alpha}
\cO_\dC\cdot z^k\cdot s(v_i^{(0)},\alpha_i).
$$
Then for any $\beta$ all sections $z^k\cdot s(v^{(0)}_i,\alpha_i)$
where $k\in \dZ,\alpha_i+k=\beta$ are linearly independent.
In a small neighborhood $U_3\subset U_2$ of $\cH$ the sections
$(z^k\cdot s(v_i,\alpha_i))_{k\in\dZ,\alpha_i+k=\beta}$ inherit this
property of being linearly independent.

For any $v_j$, define $\beta_j$ to be the unique complex number such
that $s(v_j,\beta_j)\neq 0$ and $s(v_j,\beta)=0$
for $\beta<\beta_j$. We have $\beta_j\leq \alpha_j$.
Formula \eqref{v=v_0} and the linear independence of the
$z^k\cdot s(v_i^{(0)},\alpha_i)$ show
\begin{eqnarray}\label{v=v_02}
s(v_j,\beta_j)= \delta_{\beta_j,\alpha_j}\cdot s(v_j^{(0)},\alpha_j)
+ \sum_{k,i,j:\alpha_i-k=\beta_j} (C_k)_{ij}\cdot z^{-k}\cdot s(v_i^{(0)},\alpha_i)
\end{eqnarray}
(remember that this is an equation in $\cV^{>-\infty}$, where the sections $v_j$ of $\cH\subset V^{>-\infty}$
has been extended to sections in $\pi^{-1}V^{>-\infty} \subset \cV^{>-\infty}$).

The main point is to show
\begin{eqnarray}\label{v=v_03}
(C_k)_{ij}=0 \qquad \textup{ for }\alpha_i-k<\alpha_j.
\end{eqnarray}
Then $\beta_j=\alpha_j$ and $s(v_j,\alpha_j)\in \Gr^V_{\alpha_j} \cL(U_2)$.
From this and the linear independence of the $z^k\cdot s(v_i,\alpha_i)$
it follows that $\underline{v}$ is a good basis, first on a
small $U_3\subset U_2$, then on all of $U_2$.

In order to show \eqref{v=v_03} we argue indirectly.
Suppose $(C_k)_{ij}\neq 0$ for some $\alpha_i-k<\alpha_j$ and suppose that
$\alpha_i-k$ is minimal with this property.
Then $\beta_j=\alpha_i-k$ for this $j$, and $\beta_l=\alpha_l$ for
all $l$ with $\alpha_l\leq \alpha_i-k$.
Then in a neighborhood $U_4\subset U_2$ of $\cH$ for any
$\gamma\leq \beta_j=\alpha_i-k$
$$
\bigoplus_{l,m:m\geq 0,\alpha_l+m=\gamma} \cO_{\dC\times U_4}\cdot z^m \cdot s(v_l,\alpha_l)
$$
is a submodule of $\Gr^V_\gamma \cL_{|U_4}$ of the same rank and thus
coincides with $\Gr^V_\gamma \cL_{|U_4}$.
But in the case $\gamma=\beta_j=\alpha_i-k$ we have additionally
$s(v_j,\beta_j)\in \Gr^V_\gamma \cL_{|U_4}$, and by \eqref{v=v_02}
and by the linear independence, it is not a linear combination of the
sections above. Thus $(C_k)_{ij}=0$ if $\alpha_i-k<\alpha_j$.
\end{enumerate}
\end{proof}

Next we give a concrete description of the tangent bundle of $\cbl$ using the universal bundle $\cL$.
For this purpose, we will introduce some auxiliary holomorphic bundles on $\cbl$. We will describe
local bases of these bundles, these will be written as row vectors. We
use the convention that given a (sheaf of) $\cA$-module(s) $\cN$, one can multiply matrices
with entries of $\cN$ with matrices with entries in $\cA$ by scalar multiplication.
Moreover, we make use of the tensor product of matrices,
and in particular of the following rules, where the matrices involved
are supposed to have the appropriate size.
\begin{eqnarray}
(A\otimes B)\cdot(C\otimes D)=(A\cdot C)\otimes (B \cdot D), \\
(A\otimes B)^{-1}=A^{-1}\otimes B^{-1}, \\
(A\otimes B)^{tr}=A^{tr}\otimes B^{tr}, \\
\label{eqMatVecTens} (X\otimes Y) \cdot A^{vec} = (Y \cdot A \cdot X^{tr})^{vec}.
\end{eqnarray}
In the last formula, we denote for any matrix $A\in M(m\times n, \cA)$
by $A^{vec}\in M(nm\times 1,\cA)$ the column vector obtained
by stacking the columns of $A$ in a single one.
Finally, we denote the sheaf of rings $\cO_{\dC\times\cbl}$ by $\cR$ and
its localization along $\{0\}\times\cbl$ by $\cR[z^{-1}]$.

Define $\cM:={\cE\!}nd_{\cR[z^{-1}]}(\cV^{>-\infty})$,
then $\cM$ is a meromorphic bundle with connection induced from $\cV^{>-\infty}$.
This connection is obviously regular singular so that $\cM$ carries its own
$V$-filtration, characterized by $\cV^0\cM=\left\{\phi\in\cM\,|\,\phi(\cV^\alpha)\subset\cV^\alpha\,;\;\forall\alpha\right\}$.
Consider the $\cR$-submodule $\GIw:={\cH\!}om_\cR(\cL,z^{-n}\cL)$
of $\cM$, and the quotient
$$
\GI:=\frac{\GIw}{{\cE\!}nd_\cR(\cL)}
$$
$\GI$ is a $z$-torsion sheaf and can be identified with ${\cH}\!om_\cR(\cL,z^{-n}\cL/\cL)$. As an
$\cO_\cbl$-module, it is locally free of rank $n\mu^2$.
Any section $v$ of the projection $\cL\twoheadrightarrow k_*(\cL/z\cL)$
(here $k:\cbl\hookrightarrow \dC\times\cbl, t\mapsto(0,t)$) yields an isomorphism
$$
\GI \cong\left[{\cE}\!nd_{\cO_\cbl}(\cL^{sp,v})\right]^n
$$
where $\cL^{sp,v}:=\mathit{Im}(v)$. Any local basis $\underline{v}=(v_1,\ldots,v_\mu)$ of $\cL$
in a neighborhood of a point $(0,t)\in\dC\times\cbl$ (in particular, this gives
a section $v:k_*(\cL/z\cL)\rightarrow \cL$ locally) yields a local basis of $\GI$, namely
$(\underline{z}\otimes\underline{v}^*\otimes\underline{v})\in M(1\times n\mu^2,\GI)$,
where the symbol $\underline{z}$ denotes the vector $(z^{-1},\ldots,z^{-n})$.
A local section $\phi\in\GI$ is written in this basis as
\begin{equation}
\label{eqIndBasis}
\phi=\sum_{
\begin{array}{c}
\scriptstyle k=1,\ldots,n\\
\scriptstyle i,j=1,\ldots,\mu
\end{array}
}(\Delta_k)_{ji} z^{-k}\otimes v^*_i\otimes v_j
=(\underline{z}\otimes \underline{v}^* \otimes \underline{v})(\sum_{k=1}^n e_k \otimes \Delta^{vec}_k)
\end{equation}
where $\Delta_k\in M(\mu\times\mu, \cO_\cbl)$ and $e_k\in M(n\times 1, \dC)$ is the $k$-th unit vector.

We will define a chain $\GIV\subset\GIII\subset\GII\subset\GI$ of subbundles
of $\GI$, and an injective morphism $\cT_\cbl\hookrightarrow\GI$ with image
equal to $\GIV$. This will give the description of the tangent bundle alluded to above.
\begin{definition}
Put
$$
\begin{array}{rcl}
\GIIw & := & \left\{\phi\in\GIw\,|\,P(\phi(a),b)+P(a,\phi(b))\in z^w\cR\quad\forall\,a,b\in\cL\right\},\\ \\
\GIIIw & := & \GIIw\cap\cV^0\cM, \\ \\
\GIVw & := & \left\{\phi\in\GIIIw\,|\,\mathit{ad}(z^2\nabla_z)(\phi)=[z^2\nabla_z,\phi]\in{\cE}\!nd_\cR(\cL)\right\},
\end{array}
$$
and define $\GII, \GIII$ resp. $\GIV$ to be the images of $\GIIw, \GIIIw$ resp. $\GIVw$ in $\GI$.
\end{definition}

From the definition it is clear that $\GII, \GIII$ and $\GIV$ are
$\cO_\cbl$-coherent. The following result yields local bases for $\GII$ and $\GIII$
showing that they are in fact locally free. The same is true for $\GIV$, but it is more complicated
to give an explicit local base for that bundle. Instead, we give a characterization
of the elements of $\GIV$. Its local freeness will be shown in lemma \ref{lemKS}.
\begin{lemma}
Let $\underline{v}$ be a local basis of $\cL$ as above and suppose moreover
that $P^{mat}:=(P(v_i,v_j))=(\delta_{i+j,\mu+1})$. Then we have
$$
\GII\cong \bigoplus_{(k,i,j)\in N}
\cO_\cbl z^{-k}\otimes \left(v_i^*\otimes v_j+(-1)^{k+1} v_{\mu+1-j}^*\otimes v_{\mu+1-i}\right),
$$
where
$$
N:=\left\{(k,i,j)\in\{1,\ldots,n\}\times\{1,\ldots,\mu\}^2\,|\,i+j<\mu+1 \textup{ if } k \textup{ is even, }
i+j\leq\mu+1\textup{ if }k\textup{ is odd } \right\}.
$$
Suppose moreover that $\underline{v}$ is a good basis in the sense of \cite{SM},
i.e., that $\underline{v}$ induces a basis
of $\Gr^\bullet_\cV(\cL/z\cL)$. Order the elements of $\underline{v}$ in such a way
that $v_i\in\Gr^{\alpha_i}_\cV(\cL/z\cL)$. Then
$$
\GIII\cong \bigoplus_{
\begin{array}{c}
\scriptscriptstyle (k,i,j)\in N \\
\scriptscriptstyle \alpha_i-k\geq\alpha_j
\end{array}
} \cO_\cbl z^{-k} \left(v_i^*\otimes v_j+(-1)^{k+1} v_{\mu+1-j}^*\otimes v_{\mu+1-i}\right).
$$
Although there is no simple choice for a basis of $\GIV$, its elements
can be characterized as follows: An endomorphism $\phi \in \GIIIw$
lies in $\GIVw$ iff the $\cR[\epsilon]/(\epsilon^2)$-module
$$
\widetilde{\cL}:=\bigoplus_{i=1}^\mu\cR[\varepsilon]/(\varepsilon^2)
\left(v_i+\varepsilon \phi(v_i)\right),
$$
is stable under $z^2\nabla_z$.
\end{lemma}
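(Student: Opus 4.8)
The plan is to prove the three assertions in turn, in each case by writing a general local section $\phi$ of $\GIw$ in the induced basis from $\underline v$, as in \eqref{eqIndBasis}, and translating the intrinsic condition that cuts out the subsheaf into conditions on the coefficient matrices $\Delta_1,\dots,\Delta_n$. For $\GII$: since $P(\cL,\cL)\subseteq z^w\cR$, the defining relation of $\GIIw$ depends only on $\phi$ modulo ${\cE}\!nd_\cR(\cL)$, so it descends to $\GI$; plugging $\phi(v_a)=\sum_{k,j}(\Delta_k)_{ja}z^{-k}v_j$ into $P(\phi(v_p),v_q)+P(v_p,\phi(v_q))\in z^w\cR$, using the normalization $z^{-w}P(v_i,v_j)=\delta_{i+j,\mu+1}$ together with $P(a,z^{-k}b)=(-1)^kz^{-k}P(a,b)$ (the $j$-twist in the second argument), one finds that the coefficient of $z^{w-k}$ equals $(\Delta_k)_{\mu+1-q,p}+(-1)^k(\Delta_k)_{\mu+1-p,q}$. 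Requiring these to vanish for $k=1,\dots,n$ is equivalent to the reflection relations $(\Delta_k)_{ji}=(-1)^{k+1}(\Delta_k)_{\mu+1-i,\mu+1-j}$. Solving this linear system orbit by orbit under the involution $(i,j)\mapsto(\mu+1-j,\mu+1-i)$, whose fixed locus is $\{i+j=\mu+1\}$, yields one free parameter per two-element orbit, plus one parameter for each fixed pair when $k$ is odd and none when $k$ is even; picking the representative with $i+j\le\mu+1$ for odd $k$ and $i+j<\mu+1$ for even $k$ -- i.e. the set $N$ -- gives the displayed basis, and in particular $\GII$ is locally free.

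For $\GIII$ one adds the requirement $\phi\in\cV^0\cM$, i.e. $\phi(\cV^\alpha)\subseteq\cV^\alpha$ for all $\alpha$. Here I would use that a good basis $\underline v$ with $v_i$ of order $\alpha_i$ exhibits the Deligne lattices, locally on $\cbl$, as $\cV^\alpha=\bigoplus_i z^{\lceil\alpha-\alpha_i\rceil}\cR\, v_i$; a short computation then identifies which of the generators of $\GII$ lift to elements of $\GIIIw=\GIIw\cap\cV^0\cM$, namely the generator $z^{-k}\big(v_i^*\otimes v_j+(-1)^{k+1}v_{\mu+1-j}^*\otimes v_{\mu+1-i}\big)$ does so precisely when the displayed numerical inequality relating $\alpha_i,\alpha_j,k$ holds. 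By the spectral symmetry $\alpha_l+\alpha_{\mu+1-l}=w$ this inequality is invariant under the reflection used to symmetrize the generator, so it is a well-posed condition on $N$, and keeping only those triples gives the asserted basis of $\GIII$, which is therefore again locally free.

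For the characterization of $\GIVw$ I would argue directly from the definitions. Fix $\phi\in\GIIIw$ and set $\psi:=\ad(z^2\nabla_z)(\phi)=[z^2\nabla_z,\phi]$; since $\phi$ carries $\cL$ into $z^{-n}\cL$ and $z^2\nabla_z$ stabilizes both $\cL$ (pole of type one) and $z^{-n}\cL$, the Leibniz rule shows $\psi$ is $\cR$-linear with $\psi(\cL)\subseteq z^{-n}\cL$. Consider $\widetilde\cL$ with its generators $\widetilde v_i:=v_i+\varepsilon\phi(v_i)$: modulo $\varepsilon$ these reduce to the basis $\underline v$ of $\cL$, so $\widetilde\cL$ is free of rank $\mu$ over $\cR[\varepsilon]/(\varepsilon^2)$ and $\varepsilon\widetilde\cL=\varepsilon\cL$. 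Writing $z^2\nabla_z v_i=\sum_l A_{li}v_l$ with $A\in M(\mu\times\mu,\cR)$, substituting $v_l=\widetilde v_l-\varepsilon\phi(v_l)$ and using $\sum_l A_{li}\phi(v_l)=\phi(z^2\nabla_z v_i)$ together with $\nabla\varepsilon=0$, one obtains the key identity $z^2\nabla_z\widetilde v_i=\sum_l A_{li}\widetilde v_l+\varepsilon\,\psi(v_i)$. The first term always lies in $\widetilde\cL$, so $\widetilde\cL$ is stable under $z^2\nabla_z$ iff $\varepsilon\,\psi(v_i)\in\widetilde\cL$ for all $i$; since $\widetilde\cL\cap\varepsilon\big((z^{-n}\cL)[\varepsilon]/(\varepsilon^2)\big)=\varepsilon\cL$, this is equivalent to $\psi(v_i)\in\cL$ for all $i$, i.e. to $\psi=[z^2\nabla_z,\phi]\in{\cE}\!nd_\cR(\cL)$, which is precisely $\phi\in\GIVw$.

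The fiddly but essentially routine points are the sign bookkeeping in the $P$-computation for $\GII$ (arriving at exactly $(-1)^{k+1}$) and the explicit description of the Deligne lattices via a good basis that is needed for $\GIII$. The $\GIVw$ part is short once the bracket identity $z^2\nabla_z\widetilde v_i=\sum_l A_{li}\widetilde v_l+\varepsilon\,\psi(v_i)$ is in hand; recognizing that the first-order deformation $\widetilde\cL$ of $\cL$ carries connection matrix $A+\varepsilon\,(\text{matrix of }\psi)$, and hence is $z^2\nabla_z$-stable exactly when that correction is holomorphic, is what I would regard as the conceptual core of this last statement.
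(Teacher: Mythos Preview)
Your proof is correct and follows essentially the same approach as the paper. For $\GII$ and $\GIV$ your computations are identical to the paper's (with $A,\psi$ in place of the paper's $B,B'$); for $\GIII$ the paper simply says ``one easily deduces the above bases'', whereas you spell out the role of the good basis in exhibiting the Deligne lattices, which is a reasonable expansion of that phrase.
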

\begin{proof}
The first point follows from the simple computation
$$
P(\phi(\underline{v})^{tr},\underline{v})+P(\underline{v}^{tr},\phi(\underline{v}))=
\underline{v}\cdot\sum_{k=1}^n z^{-k}\left(\Delta_k^{tr}\cdot P^{mat} + (-1)^k P^{mat} \cdot \Delta_k\right)
=0\quad\quad\textup{mod }\cL
$$
From the condition $\Delta_k^{tr}\cdot P^{mat} + (-1)^k P^{mat} \cdot \Delta_k=0$ one easily deduces the above bases of
$\GII$ and $\GIII$.
For the description of $\GIV$, note that $\widetilde{\cL}$ is stable under
$z^2\nabla_z$ iff there is $B' \in M(\mu\times\mu,\cO_\cbl)$ such that
$$
(z^2\nabla_z)(\underline{v}+\varepsilon\phi(\underline{v}))=\underline{v}\cdot(B+\varepsilon B'),
$$
where $(z^2\nabla_z)(\underline{v})=\underline{v}\cdot B$. In the ring $\cR[\varepsilon]/(\varepsilon^2)$,
this is equivalent to
$$
[z^2\nabla_z,\phi](\underline{v})=\underline{v}\cdot B',
$$
i.e., to $\phi\in\GIV$.
\end{proof}
We are now in the position to compare the bundle $\GI$ with the tangent bundle
of $\cbl$. Define the following morphism:
$$
\begin{array}{rcl}
\KS:\cT_\cbl & \longrightarrow & \GI \\ \\
X & \longmapsto & [v\in\cL \mapsto \nabla_X v]
\end{array}
$$
where the brackets on the right-hand side denote the class in
the quotient $\GI$.
\begin{lemma}
\label{lemKS}
$\GIV$ is a bundle, and $\KS$ is a bundle isomorphism from $\cT_\cbl$ to $\GIV$.
\end{lemma}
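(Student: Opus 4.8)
The plan is to produce, for each point $t\in\cbl$, a local holomorphic family of ``good bases'' of $\cL$ in which the differential-geometric data become explicit matrix computations, and then to show that $\KS$ identifies first-order deformations of the lattice $\cH(t)$ inside $V^{>-\infty}$ with elements of $\GIV$. Concretely, fix $t\in\cbl$ and choose a good basis $\underline v^{(0)}$ of $\cH(t)$ in the sense of \cite{SM} with $P^{mat}=(\delta_{i+j,\mu+1})$; this is possible because any $\cH\in\cbl$ has the prescribed spectral numbers $\alpha_1,\dots,\alpha_\mu$, and one can normalise the pairing on a good basis since $z^{-w}P$ is a nondegenerate symmetric pairing on $\cH/z\cH$. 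By Lemma \ref{lemCanExtBase}, there is a unique extension $\underline v$ to a neighbourhood $U_1\times U_2$ of $(0,t)$ which is a good basis of $\cL_{|U_1\times U_2}$ with $z^{-w}P(\underline v^{tr},\underline v)$ still the constant matrix $P^{mat}$, and which satisfies \eqref{v=v_0}. Using this basis, one reads off a local frame of $\GIV$ from the characterization in the previous lemma (stability of $\widetilde{\cL}$ under $z^2\nabla_z$), which is an open/closed linear condition on the $\cO_\cbl$-coefficients cutting out $\GIV$ inside $\GIII$; since the defining condition is the vanishing of an $\cO_\cbl$-linear map out of the locally free sheaf $\GIII$, one checks the rank of the kernel is locally constant, equal to $\dim_\dC\cbl$. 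This gives local freeness of $\GIV$ of the correct rank.

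Next I would treat $\KS$ itself. It is manifestly $\cO_\cbl$-linear and globally well defined. To see that it lands in $\GIV$: given $X\in\cT_\cbl$, the operator $v\mapsto\nabla_X v$ maps $\cL$ into $z^{-n}\cL$ by Lemma \ref{lemPoleOrder-n}, so it defines a class in $\GI$; it lies in $\GII$ because $\nabla_X$ is a derivation for the flat pairing $P$, hence $P(\nabla_X a,b)+P(a,\nabla_X b)=X\,P(a,b)\in z^w\cR$ since $P(\cL,\cL)\subset z^w\cR$; it lies in $\cV^0\cM$ because the $\cV^\alpha$ are flat with respect to the $\cbl$-directions (they are pullbacks under $\pi$), so $\nabla_X$ preserves each $\cV^\alpha$; and it lies in $\GIV$ because flatness of $\nabla$ gives $[z^2\nabla_z,\nabla_X]=\nabla_{[z\partial_z\cdot z,\,X]}$-type terms landing in $\End_\cR(\cL)$ — more precisely, since $z^2\nabla_z$ preserves $\cL$ and $[\nabla_X,z^2\nabla_z]$ is, by integrability, a connection operator along a vertical direction composed with multiplication, it preserves $\cL$, which is exactly the condition $\ad(z^2\nabla_z)(\KS(X))\in\End_\cR(\cL)$.

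It remains to prove $\KS$ is bijective onto $\GIV$. For this I would use the description of $\cbl$ from \cite{He2} recalled after Definition \ref{defClassSpaces}: $\cbl$ is a smooth manifold whose points are the lattices $\cH_0\subset V^{>-\infty}_0$ satisfying the stated conditions, and its tangent space at $\cH$ is naturally the space of first-order deformations of $\cH$ inside $V^{>-\infty}$ preserving those conditions. A first-order deformation is encoded by a class $\phi\in\Hom_\cR(\cL,z^{-n}\cL/\cL)=\GI$ via $v\mapsto v+\varepsilon\phi(v)$ (over $\dC[\varepsilon]/(\varepsilon^2)$), and the conditions defining $\cbl$ — $z^2\nabla_z$-stability, $P(\cdot,\cdot)=z^w\cO$, and $\widetilde F^\bullet_\cH\in\DcPMHS$, the last being automatic at first order once the others hold because the discrete data are locally constant on $\DcPMHS$ — translate exactly into $\phi\in\GIIw$, $\phi\in\GIVw$; the $\cV^0\cM$ condition is forced since the lattice stays inside the fixed meromorphic bundle $V^{>-\infty}$ with its fixed $V$-filtration. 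Under the identification of the tangent space to the deformation functor with $\cT_\cbl$, this deformation is realised by $\nabla_X$ for the corresponding $X$, so $\KS(X)=\phi$; this shows $\KS$ is surjective onto $\GIV$, and injectivity follows because $\nabla_X v\equiv 0$ in $z^{-n}\cL/\cL$ for all local sections $v$ of $\cL$ forces the deformation to be trivial, hence $X=0$. Combining surjectivity, injectivity, and the equality of ranks (both $\cO_\cbl$-modules are locally free of rank $\dim\cbl=\rk\GIV$) gives that $\KS$ is a bundle isomorphism.

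The main obstacle I anticipate is the last step: making rigorous the identification of $\cT_\cbl$ with the space of admissible first-order lattice deformations and matching it with $\nabla_X$. The construction of $\cbl$ in \cite{He2} proceeds via the explicit charts of \cite[Lemma 5.2–Theorem 5.6]{He2}, so one must either quote that these charts realise precisely the deformation functor above, or recompute the tangent map of a chart and compare it with $\GIVw$ in the normalised basis $\underline v$ — a bookkeeping exercise with the matrices $C_k$, $\Delta_k$, $B$ of Lemma \ref{lemCanExtBase} and the preceding lemma. The pairing and $V^0\cM$ conditions are routine derivation/flatness arguments; the real care is needed in showing the $z^2\nabla_z$-stability of the deformed lattice is equivalent to $\ad(z^2\nabla_z)(\phi)\in\End_\cR(\cL)$, which is exactly the content of the characterization of $\GIV$ in the lemma before Lemma \ref{lemKS}, so that part can be invoked directly.
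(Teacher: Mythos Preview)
Your approach is essentially the same as the paper's: verify that $\KS$ lands in $\GIV$ via the flatness/derivation properties of $\nabla$, and then establish bijectivity by invoking the explicit coordinate construction of $\cbl$ from \cite{He2}, \cite{He3}, where the $(C_k)_{ij}$ with $\alpha_i-k\geq\alpha_j$ are coordinates on the ambient space $D_{Sp}$ and $\cbl$ is cut out by equations whose independent linear parts are precisely the conditions defining $\GIV$ inside $\GIII$. Two small points deserve tightening. First, your opening paragraph tries to argue local freeness of $\GIV$ directly by claiming ``one checks the rank of the kernel is locally constant, equal to $\dim_\dC\cbl$''; this is circular, since that equality is the content of the lemma. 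The paper (and your own later paragraphs) deduce local freeness \emph{from} the isomorphism $\KS:\cT_\cbl\xrightarrow{\sim}\GIV$, which is the right order. Second, your justification that a first-order deformation automatically lies in $\cV^0\cM$ ``since the lattice stays inside the fixed meromorphic bundle $V^{>-\infty}$'' is not quite the reason: membership in $V^{>-\infty}$ does not by itself force preservation of each $\cV^\alpha$. The correct (and easy) argument is the one you already gave for $\nabla_X$, namely that the $\cV^\alpha$ are pullbacks along $\pi$ and hence flat in the $\cbl$-directions; once you know $\cbl$ is smooth and every first-order deformation is realised by some $X$, the $\cV^0\cM$ condition follows from this.
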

\begin{proof}
First we will prove that given $X\in\cT_\cbl$, the covariant derivative $\nabla_X$
really defines an element in $\GI$. It was already shown that for any $v\in\cL$,
$\nabla_X v$ lies in $z^{-n}\cL$. On the other hand, if $f\in\cO_\cbl$,
then $\nabla_X(f\cdot v)=f\nabla_X(v)+X(f)\cdot v$, but $X(f)\cdot v\in\cL$, so modulo
${\cE}\!nd(\cL)$ we have $\nabla_X(f\cdot v)=f\nabla_X(v)$. Moreover, the flatness of $P$ implies
that $P(\nabla_X(-),-)+P(-,\nabla_X(-))=XP(-,-)$, so $\mathit{Im}(KS)\subset\GII$
and it is clear that the $\cV$-filtration is respected, i.e., that we
get an element in $\cV^0\cM$, as we derive only in parameter direction.
This proves $\mathit{Im}(\KS)\subset\GIII$. From $[z^2\nabla_z,\nabla_X]=0$ it follows immediately
that $\mathit{Im}(\KS)\subset\GIV$.

The last step of the proof is to show that $\KS$ maps $\cT_\cbl$
isomorphically onto $\GIV$. Then it follows that $\GIV$ is locally free.
We will use the construction of coordinates on the fibers resp. on the base of the projection
$\cbl\to\DcPMHS$ in \cite[Ch. 5]{He2} resp. in  \cite[proof of theorem 12.8]{He3}.
We will rephrase the outcome of these constructions,
without going into details. From that it will follow that
$\KS$ is a bundle isomorphism onto $\GIV$.

Having fixed a reference TERP-structure, we first consider the larger spaces
\begin{eqnarray*}
D_{Sp}&=& \{\cH\subset V^{\alpha_1}\ |\ \cH \textup{ is a free }
\cO_\dC-\textup{module} \\
&& \hspace*{2.2cm} \textup{with spectral numbers } \alpha_1,...,\alpha_\mu\},\\
D_{Fl}&=& \{\widetilde{F}^\bullet_\cH  |\ \cH\in D_{Sp}\},
\end{eqnarray*}
where $\widetilde{F}^\bullet_\cH $ denotes the filtration
defined by $\cH$ on the space $H^\infty$ (see definition
\ref{defHodgeFiltSpectrum}).
We have an obvious projection $D_{Sp}\to D_{Fl}$. Here $D_{Fl}$ is a flag manifold,
and $D_{Sp}$ is also a manifold, with fibers isomorphic to
some $N_{Sp}\in\dN$. Local coordinates on $D_{Sp}$ and $D_{Fl}$
can be chosen as follows:
For some $\cH\in D_{Sp}$ one fixes a good basis $\underline{v}^0$.
The analogue of lemma \ref{lemCanExtBase} holds and provides
a unique good basis $\underline{v}$ in a
neighborhood $U$ of $\cH\in D_{Sp}$ where
\begin{eqnarray*}
\underline{v}=\underline{v}^0\cdot
\left(\de_\mu+\sum_{k=1}^n z^{-k} C_k\right)
\end{eqnarray*}
with $(C_k)_{ij}=0$ if $\alpha_i-k<\alpha_j$.
Now the $(C_k)_{ij}$ with $\alpha_i-k\geq\alpha_j$ are local coordinates
on $D_{Sp}$ and those with $\alpha_i-k=\alpha_j$ are local coordinates
on $D_{Fl}$.

The construction of coordinates on $\cbl$ in \cite{He2}\cite{He3}
amounts to the following:
For $\cH\in \cbl\subset D_{Sp}$ a very special good basis was chosen.
It gives local coordinates $(C_k)_{ij}$ with $\alpha_i-k\geq\alpha_j$
on $D_{Sp}$. The conditions from the pairing $P$ and the pole of order
$2$, which determine $\cbl$ in $D_{Sp}$ locally, were shown to give a set of equations in $(C_k)_{ij}$
whose linear parts are linearly independent. This proved the smoothness
of $\cbl$.

Now the definition of $\GIV$ uses exactly these linear parts.
This shows the injectivity of $\KS$ and that $\mathit{Im}(\KS)=\GIV$.
\end{proof}
By abuse of notation, we call $\KS$ the Kodaira-Spencer morphism, although this is the
correct name only if we consider $\KS$ as an isomorphism between
$\cT_\cbl$ and $\GIV$.

It will be useful to have a local characterization of $\GIV$ in the basis
of $\GIII$ given above. Let $\phi=
(\underline{z}\otimes \underline{v}^* \otimes \underline{v})(\sum_{k=1}^n e_k \otimes \Delta^{vec}_k)$
be a local section of $\GIII$, i.e., $(\Delta_k)_{ij}+(-1)^k(\Delta_k)_{\mu+1-j,\mu+1-i}=0$ and
$(\Delta)_{ij}=0$ for all $\alpha_i-k<\alpha_j$. Then $\Delta\in\GIV$ iff there is
$B'\in M(\mu\times\mu, \cR)$ such that
$$
(z^2\nabla_z)(\underline{v}+\varepsilon \phi(\underline{v})) = \underline{v}\cdot(B+\varepsilon B')
$$
where $B\in M(\mu\times\mu, \cR)$ is defined by $(z^2\nabla_z)(\underline{v})=\underline{v}\cdot B$.
This is equivalent to
$$
B'=[B,\sum_{k=1}^n z^{-k}\cdot \Delta_k]
+\sum_{k=1}^n (-k)z^{-k+1}\cdot \Delta_k.
$$
$B'$ is required to be holomorphic, so that the coefficients
of all strictly negative powers of $z$ in this equation must vanish.
Writing $B=\sum_{k=0}^\infty z^k\cdot B_{k-1}$, there are a priori conditions
for any $l=1,...,n$ (with $\Delta_{n+1}:=0$):
\begin{eqnarray}
\label{eqCondForGIV}
0=(\textup{coefficient of }z^{-l})
=(-1-l)\Delta_{l+1}+ \sum_{k=l}^{n} [B_{k-1-l},\Delta_{k}].
\end{eqnarray}
However, as we are working with a good basis $v$ of $\cL$,
it follows that $v_i\in\cV^{\alpha_i}$ and $(z\nabla_z)v_i\in\cV^{\alpha_i}$. This gives
$(B_k)_{ij}=0$ for all $\alpha_i+k<\alpha_j$. Remember also that $(\Delta_k)_{ij}=0$ for
$\alpha_i-k<\alpha_j$. The equation $[\Delta_n, B_{-1}]=0$ is thus trivially
satisfied, so that the conditions \eqref{eqCondForGIV} are non-empty only for $l\in\{1,\ldots,n-1\}$.

We will now define the analogue of the subbundle of horizontal tangent directions
on the classifying spaces $\DcPHS$ in the sense of \cite{Sch} for the space $\cbl$.
\begin{deflemma}\label{defHorBundles}
Define the following subsheaf of $\GI$:
$$
\GIh:=\textup{Image of }{\cH}\!om_\cR(\cL,z^{-1}\cL)\textup{ in }\GI,
$$
and put $\GIIh:=\GIh\cap \GII$, $\GIIIh:=\GIh\cap \GIII$ and
$\GIVh:=\GIh\cap \GIV$. Then $\GIh, \GIIh$
and $\GIIIh$ are $\cO_\cbl$-locally free.
$\GIVh$ is an $\cO_\cbl$-coherent subsheaf of $\GIV$. It is
equal to $\GIV$ and thus locally free if $n=\lfloor\alpha_\mu-\alpha_1\rfloor=1$.
We call $\cT^{hor}_\cbl:=\KS^{-1}(\GIVh)$ the subsheaf of horizontal
tangent directions or horizontal tangent sheaf for short.
\end{deflemma}
\begin{proof}
The $\cO_\cbl$-coherence of all of the sheaves in question is obvious from their
definition, and one obtains local bases of $\GIh$, $\GIIh$ resp. $\GIIIh$ by restricting
the bases of $\GI$, $\GII$ resp. $\GIII$ to $k=1$. A section $\phi=
(\underline{z}\otimes \underline{v}^* \otimes \underline{v})(\sum_{k=1}^n e_k \otimes \Delta^{vec}_k)$
is contained in one of these subbundles iff $\Delta_k=0$ for $k>1$.
The equality $\GIVh=\GIV$ for $n=1$ is obvious from the definition
as we have $\GIh=\GI$ in this case.
\end{proof}

In the remainder of this section, we show that $\GIVh$ is not locally
free in general. For that purpose, consider the local basis of
$\GIIIh$ given above. In this basis, the conditions for a section
$\phi\in \GIIIh$ to be an element in $\GIVh$ are simply obtained from
formula \eqref{eqCondForGIV} by putting all $\Delta_k=0$ if $k>1$. This yields
the unique equation
\begin{equation}
[B_{-1},\Delta_1]=0.
\end{equation}
Note that $B_{-1}$ is the matrix of the pole part of $\nabla_z$ with respect to $\underline{v}$, i.e.,
of the endomorphism $\cU:=[z^2\nabla_z]\in{\cE}\!nd_{\cO_\cbl}(\cL/z\cL)$. Similarly, $\Delta_1$ is
by definition the matrix of the class $[z\phi]\in {\cE}\!nd_{\cO_\cbl}(\cL/z\cL)$. This shows that we
have the following simple characterization
$$
\GIVh=\left\{\phi\in\GIIIh\,|\,[\cU,[z\phi]]=0\right\}.
$$
We have $\cU,[z\phi]:\cV^\bullet(\cL/z\cL)\subset\cV^{\bullet+1}(\cL/z\cL)$, so
that $[\cU,[z\phi]]=0$ if $n=1$. This implies $\GIVh=\GIIIh$, in particular,
$\GIV=\GIIIh$ in this case.

If $n\geq 2$ then in general $\GIVh$ will not be locally free. The
reason is that in general the rank of the condition $[\cU,[z\phi]]=0$
varies within $\cbl$. Note however that the base $\DcPMHS$ carries a horizontal
subbundle, as it is a homogeneous space, so that the obstruction for
$\GIVh$ to be locally free lies in the fibers of
$\cbl\to\DcPMHS$. We will describe a situation where this actually occurs.

For simplicity we restrict to a situation in which $\dim C^\alpha=1$ for all $\alpha$.
Then $N=0$ and $\DPHS=\DcPHS=\DPMHS=\DcPMHS=\{pt\}$, so that
$D_\mathit{BL}=\cbl=\dC^{N_\mathit{BL}}$ for
some $N_\mathit{BL}$.
We choose $v_i^0\in C^{\alpha_i}$ with $P(v_i^0,v_{\mu+1-j}^0)=\delta_{ij}$.
By \cite[Ch. 5]{He2} formula \eqref{v=v_0} holds on all of $\cbl$, with
\begin{eqnarray*}
(C_k)_{ij}&=&0 \qquad \textup{ for }\alpha_i-k\leq \alpha_j,\\
(\alpha_i-k-\alpha_j)\cdot(C_k)_{ij}&=& \sum_{l}(\alpha_l-1-\alpha_j)\cdot
(C_{k-1})_{il}(C_1)_{lj},\\
(C_1)_{ij}&=&(C_1)_{\mu+1-j,\mu+1-i},\\
\cU([\underline{v}]) &=& [\underline{v}]\cdot
((\alpha_i-1-\alpha_j)\cdot (C_1)_{ij}),
\end{eqnarray*}
and global coordinates on $\cbl$ are given by
those $(C_1)_{ij}$ where $i+j\leq\mu+1$.
The zero point of these coordinates is the TERP-structure $\cH^0=\bigoplus\cO_\dC\cdot v_i^0$.

In the basis of $\GIIIh$ in definition-lemma \ref{defHorBundles}
$$[z\phi]([\underline{v}])=[\underline{v}]\cdot \Delta_1$$
with $(\Delta_1)_{ij}=0$ if $\alpha_i-1\leq \alpha_j$
(equality is impossible due to $\dim C^\alpha=1$)
and $(\Delta_1)_{ij}=(\Delta_1)_{\mu+1-j,\mu+1-i}$.
We have $\rank \GIIIh=N_{BL}$ and
$$
\cT^{hor}_{|0}\cbl \cong \cG^\mathit{IV,h}_{|0} = \cG^\mathit{III,h}_{|0} \cong \cT_{|0}\cbl
$$
as $\cU$ and $[z\phi]$ commute at the point $0$. To prove that $\GIVh$
is not locally free it is sufficient to show that
the condition $[\cU,[z\phi]]=0$ is non-empty at some point $t\in \cbl$ .

If there are $\alpha_i,\alpha_l,\alpha_m$ with
$\alpha_i-2>\alpha_l-1>\alpha_m$ and $m\neq \mu+1-i$ then
the $(i,m)$ entry of the commutator of the matrices corresponding
to $\cU$ and $[z\phi]$ is
$$
\sum_j \left((\alpha_i-1-\alpha_j)(C_1)_{ij}\cdot (\Delta_1)_{jm}
- (\Delta_1)_{ij}\cdot (\alpha_j-1-\alpha_m)(C_1)_{jm}\right).
$$
This sum is non-empty because $j=l$ gives a term,
and all present $(\Delta_1)$-coefficients are different.
So if $(C_1)_{il}(t) \neq 0$ this gives a non-empty condition, and
$\rank \cT^{hor}_{|t}\cbl < \rank \cT^{hor}_{|0}\cbl$.

An example of this type can be constructed starting with a suitable
semiquasihomogeneous deformation of a Brieskorn-Pham singularity
$f=x_0^{a_0}+x_1^{a_1}+x_2^{a_2}$ where $\textup{gcd}(a_i,a_j)=1$ for
$i\neq j$ and such that $\frac{1}{a_0}+\frac{1}{a_1}+\frac{1}{a_2}$ is
sufficiently
small.

\section{Holomorphic sectional curvature}\label{secHolSectCurv}
\setcounter{equation}{0}

One of the most interesting features of a TERP-structure
is the construction of a canonical extension to a twistor, i.e.,
a $\dP^1$-bundle. Starting with a family of TERP-structures,
this yields a $\cC^\infty$-family (which is actually real analytic) of twistors. Let us briefly recall how this is done
(see \cite{He4} and \cite{HS1} for more details).

Given a TERP-structure $\TERP$, define for any $z\in\dC^*$ the anti-linear involution
$$
\begin{array}{rcl}
\tau: H_z & \longrightarrow & H_{1/\overline{z}}\\
s & \longmapsto & \nabla\textup{-parallel transport  of }\overline{z^{-w}s}
\end{array}
$$
The image $\tau(\cH_0)$ of the germ of sections of $\cH$ at zero is contained in the germ
$(\widetilde{i}(\cH'))_\infty$, where $\widetilde{i}:\dC^*\rightarrow \dP^1\backslash\{0\}$.
This defines an extension of $H$ to infinity, which is a holomorphic $\dP^1$-bundle, i.e., a twistor.
We will denote it by $\widehat{H}$. If $\widehat{H}$ is trivial, then we call the original
TERP-structure pure. Moreover, in this case we can define a hermitian pairing
$h$ on $H^0(\dP^1,\widehat{\cH})$ by the formula $h(a,b):=z^{-w}P(a,\tau b)$. If this
form is positive definite, then $\TERP$ is called pure polarized.
We only remark (this is discussed in detail in \cite{He4} and \cite{HS1}) that
if we start with a family of TERP-structures, then this extension procedure yields
a real analytic family of twistors.
Define
$$
\cblp:= \left\{\cH \in \cbl\,|\, \widehat{\cH}\mbox{ is pure polarized }\right\}.
$$
One of the main results in \cite{HS1}, namely, theorem 6.6 says
that a regular singular TERP-structure is an element of $D_\mathit{BL}$ iff it induces a
nilpotent orbit, i.e. iff the pullback
$$
\pi_r^*(H,H'_\dR,\nabla,P),
$$
where  $\pi_r:\dC\to\dC,\;z\mapsto r\cdot z$,
is a pure polarized TERP-structure
for any $r\in \dC^*$ with $|r|$ sufficiently small.

Such a pullback is then also an element of $D_\mathit{BL}$. Therefore the set
$\cblp$ of all pure polarized TERP-structures in $\cbl$
is non-empty, and it intersects $D_\mathit{BL}$ nontrivially.
The condition to be pure and polarized
is open, so $\cblp$ is an open submanifold of $\cbl$.

There is no reason to expect $\cblp\subset D_\mathit{BL}$, but the intersection
$\cblp\cap D_\mathit{BL}$ seems to be most interesting for applications.
If $N=0$ then $\DPHS=\DPMHS$ and moreover
$\cblp\cap D_\mathit{BL}$ contains a neighborhood of the
zero section $\DPMHS\hookrightarrow D_\mathit{BL}$. The reason is that if $N=0$,
then the action by pullback $\pi_r^*$ coincides with the good $\dC^*$-action
considered in \cite[Theorem 5.6]{He2}.

Performing the above construction on the whole classifying space $\cbl$ yields
an extension of the universal bundle $\cL$ to a real analytic family of twistors
$\widehat{\cL}$, that is, a locally free $\cO_{\dP^1}\cC^{an}_\cbl$-module.
Moreover, on the subspace $\cblp$ the sheaf of fibrewise global sections
$p_*\widehat{\cL}_{|\cblp}$ is by definition locally free over $\cC^{an}_\cblp$
and comes equipped with a positive definite hermitian metric $h$. We will show
that this induces positive definite hermitian metrics on the bundles $\GI,\ldots,\GIV$, restricted
to $\cblp$.

Denote by $\cK$ the sheaf $\CanD\otimes\cL_{|\cblp}$ and put $\Ks:=p_*\widehat{\cL}_{|\cblp}$,
then we have a hermitian metric $h:=z^{-w}P(-,\tau-)$ on
the $\CanD$-module $\Ks$. We obtain a splitting
$$
k^{-1}\cK = \cK^{sp} \oplus k^{-1}(z\cK)
$$
where $k:\cblp\hookrightarrow\dC\times\cblp, t\mapsto (0,t)$. This yields
$$
\begin{array}{c}
\CanD\otimes k^{-1}\GI =
{\cH}\!om_{k^{-1}\cO_\dC\CanD}\left(\cK^{sp} \oplus k^{-1}(z\cK), k^{-1}\left(\frac{z^{-n}\cK}{\cK}\right)\right) \cong \\ \\
{\cH}\!om_{\CanD}\left(\Ks, \oplus_{k=1}^n z^{-k}\Ks\right) = \oplus_{k=1}^n {\cH}\!om_{\CanD}\left(\Ks, z^{-k}\Ks\right)
\cong \left[{\cE}\!nd_{\CanD}(\Ks)\right]^n
\end{array}
$$
We obtain a hermitian metric on ${\cE}\!nd_{\CanD}(\Ks)$ (and its powers) by
$h(\phi,\phi')=Tr(\phi\cdot(\phi')^*)$, where $(-)^*$ denotes the hermitian adjoint.
This induces a metric on $\GI$ and by restriction on the subbundles $\GII$, $\GIII$ and $\GIV$.
We denote these metrics by $h^{I}, \ldots, h^{IV}$.
We remark that choosing any local basis $\underline{u}\in
M(1\times n, \cK)$ of $\cK$ in a neighborhood of a point
$(0,t)\in\{0\}\times\cblp$ yields a similar splitting
$$
k^{-1}\cK=\cK^{\mathit{sp},\underline{u}}\oplus k^{-1}(z \cK)\mbox{ and }
\CanD\otimes \GI\cong \left[{\cE}\!nd_{\CanD}(\cK^{\mathit{sp},\underline{u}})\right]^n,
$$
where $\cK^{\mathit{sp},\underline{u}}:=\oplus_{i=1}^\mu \CanD u_i$.
If $\underline{u}$ is a global basis of $\widehat{\cL}_{|\cblp}$, then
$\cK^{\mathit{sp},\underline{u}}=\Ks$. If $\underline{u}$ happens to be holomorphic,
i.e., $\underline{u}\in M(1\times \mu, \cL)$, then $\GI\cong
\left[{\cE}\!nd_{\cO_\cblp}(\cL^{\mathit{sp},\underline{u}})\right]^n$, this isomorphism was
already considered in section \ref{secTangent}.
Similarly to the holomorphic basis from
formula \eqref{eqIndBasis}, we obtain a basis of $\CanD\otimes \GI\cong \left[{\cE}\!nd_{\CanD}(\cK^{\mathit{sp},\underline{u}})\right]^n$,
namely, $\underline{z}\otimes\underline{u}^*\otimes
\underline{u}\in M(1\times n\mu^2,\CanD\otimes \GI)$ and
any section $\phi$ of
$\CanD\otimes \GI$ is written in the basis $\underline{z}\otimes\underline{u}^*\otimes
\underline{u}$ as
$$
\phi=\sum_{
\begin{array}{c}
\scriptstyle k=1,\ldots,n\\
\scriptstyle i,j=1,\ldots,\mu
\end{array}
}(\Gamma_k)_{ji} z^{-k}\otimes u^*_i \otimes u_j
=(\underline{z}\otimes \underline{u}^* \otimes \underline{u})(\sum_{k=1}^n e_k \otimes \Gamma^{vec}_k)
$$
(remember that $\underline{z}:=(z^{-1},\ldots,z^{-n})$, that
$e_k\in M(n\times 1, \dC)$ is the $k$-th unit vector and that $A^{vec}$ denotes the column vector of a
matrix $A$ as explained after formula \eqref{eqMatVecTens}).

Recall the Kodaira-Spencer map from lemma \ref{lemKS}
$$
\begin{array}{rcl}
\KS:\cT_\cblp & \hookrightarrow & \GI \\ \\
X & \longmapsto & [v\mapsto \nabla_X v]),
\end{array}
$$
which endows $\cT_\cblp$ with a positive definite hermitian metric which we simply denote by $h$.
Recall also that we denoted by $\cT^{hor}_\cblp$ the coherent subsheaf of $\cT_\cblp$ defined by
$\cT^{hor}_\cblp:=\KS^{-1}({\cH}\!om_{\cO_{\dC\times\cblp}}(\cH,\frac{z^{-1}\cH}{\cH}))$,
and that it is not locally free in general. However, it contains the zero section of
$T_\cblp\rightarrow \cblp$, and we may consider the holomorphic sectional
curvature of the metric $h$ on vectors of $T_\cblp^{hor}\backslash\{\textup{zero section}\}$. Let us briefly recall its the definition:
Given any holomorphic bundle $E$ on a complex manifold $M$ and a positive definite hermitian metric
$h$ on $E$, there is a unique connection $D:\cE\rightarrow \cE\otimes\cA^1_M$ such that
$D(h)=0$ and such that the $(0,1)$-part of $D$ coincides with the operator defining
the holomorphic structure of $\cE$. $D$ is called the Chern connection of $(E,h)$. Its
curvature is by definition the section $R$ of ${\cE}\!nd_{\CanM}(\CanM\otimes\cE)\otimes \cA^{1,1}_M$ given
by $e\stackrel{R}{\mapsto} D^{(2)}(D(e))$, here $D^{(2)}:\cE\otimes\cA^1_M\rightarrow\cE\otimes\cA^2_M$, $D^{(2)}(e\otimes\alpha)
=D(e)\wedge\alpha+s\otimes d\alpha$. If $E$ is the holomorphic
tangent bundle of $M$, then the function
$$
\begin{array}{rcl}
\kappa: T_M \backslash \{\mbox{zero section}\} & \longrightarrow & \dR \\ \\
\xi&\longmapsto& h(R(\xi,\overline{\xi})\xi,\xi)/h(\xi,\xi)^2
\end{array}
$$
is called the holomorphic sectional curvature of $M$.

We are now able to state the main result of this section.
\begin{theorem}
The restriction of the holomorphic sectional curvature
$\kappa:T_\cblp\backslash\{\mbox{zero-section}\} \rightarrow \dR$
to the (linear space associated to the) coherent subsheaf $\cT_\cblp^{hor}$ is
bounded from above by a negative real number.
\end{theorem}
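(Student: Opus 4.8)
The plan is to combine the explicit description of the tangent bundle from Section~\ref{secTangent} with the geometry coming from the family of pure polarized twistors over $\cblp$, reducing the curvature estimate to a statement about nilpotent endomorphisms. First I would record that structure. By the twistor construction the universal family $\cL$ extends to a real analytic family $\widehat\cL$ of pure polarized twistors over $\cblp$, and $\Ks=p_*\widehat\cL_{|\cblp}$ carries the positive definite metric $h=z^{-w}P(-,\tau-)$; let $D$ be its Chern connection. For a horizontal tangent vector $\xi\in\cT^{hor}_\cblp$ the Higgs field $\cC_\xi:=[z\nabla_\xi]_{|z=0}\in\End_{\cO_\cblp}(\cL/z\cL)\cong\End(\Ks)$ is defined, and, exactly as in the $tt^*$-geometry of variations of TERP-structures (\cite{He4},\cite{HS1}), the flatness of $\nabla$ together with the compatibility of $P$ and $\tau$ with it yields the Hitchin-type identity
\[
 R_D(\xi,\bar\xi)=-[\cC_\xi,\cC_\xi^{*}],
\]
the adjoint being taken with respect to $h$. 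Under the Kodaira--Spencer isomorphism $\KS:\cT_\cblp\to\GIV$ of Lemma~\ref{lemKS} the horizontal vector $\xi$ is sent to $\GIVh\subset\GIh$, and the identification $\GIh\cong\End_{\cO_\cblp}(\cL/z\cL)$ of Section~\ref{secTangent} carries $\KS(\xi)$ to $C_\xi:=[z\,\KS(\xi)]$, which is precisely (up to sign) $\cC_\xi$; moreover the metric $h$ on $\cT_\cblp$ restricts on horizontal vectors to $h(\xi,\xi)=\|C_\xi\|^2$ (up to a fixed positive constant), where $\|\,\cdot\,\|$ denotes the Hilbert--Schmidt norm attached to $h$.

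Next I would compute the holomorphic sectional curvature of $(\cT_\cblp,h)$. Since $\KS$ is an isometry onto $(\GIV,h^{IV})$, one has to write out the Chern connection of $(\GIV,h^{IV})$ in the local frames of Section~\ref{secTangent} (the bases $\underline v$ and the matrices $B$, $\Delta_k$) and express its curvature through $D$, $\cC$ and the operators governing the behavior in the $z$-direction, notably $\cU=[z^2\nabla_z]$ which enters the very definition of $\GIV$. On the $z^{-1}$-part $\GIh$ the induced connection is $\ad(D)$, so the induced curvature is $\ad(R_D)$; evaluating at a horizontal $\xi$, using the Hitchin identity above and the elementary trace identity
\[
 \Tr\bigl([[C,C^{*}],C]\,C^{*}\bigr)=\Tr\bigl([C,C^{*}]^{2}\bigr)=\bigl\|[C,C^{*}]\bigr\|^{2},
\]
the computation should produce an estimate $h(R(\xi,\bar\xi)\xi,\xi)\le-\|[C_\xi,C_\xi^{*}]\|^{2}$, that is $\kappa(\xi)\le-\|[C_\xi,C_\xi^{*}]\|^{2}/\|C_\xi\|^{4}$, for every $\xi\in\cT^{hor}_\cblp$. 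The substance here is that the further contributions created by the non-homogeneity of $\cbl$ --- the coupling of $\GIh$ to the higher $z$-orders $z^{-2},\dots,z^{-n}$ (a second-fundamental-form type term involving $C_\xi^{2}$) and the terms containing $\cU$ --- are all $\le0$ on horizontal directions; this is the technical heart of the proof.

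Granting this estimate, it remains to make the bound uniform over the non-compact, non-homogeneous $\cblp$, and the decisive input is that $C_\xi$ is nilpotent of uniformly bounded index. Indeed, in a good basis ordered so that $v_i\in\Gr^{\alpha_i}_\cV(\cL/z\cL)$, the description of $\GIIIh$ in Section~\ref{secTangent} gives $(C_\xi)_{ij}=0$ whenever $\alpha_i-1\le\alpha_j$, so $(C_\xi^{m})_{ij}\ne0$ forces $\alpha_i>\alpha_j+m$; since $\alpha_1+n+1>\alpha_\mu$ with $n=\lfloor\alpha_\mu-\alpha_1\rfloor$, this yields $C_\xi^{n+1}=0$ (if $n=0$ then $\cbl$ is a single point and there is nothing to prove). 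Moreover $C_\xi\ne0$ for $\xi\ne0$, since $\KS$ is an isomorphism and $\GIh\cong\End(\cL/z\cL)$. Now the set $\{C\in M(\mu\times\mu,\dC)\mid C^{n+1}=0,\ \|C\|=1\}$ is compact, and on it $\|[C,C^{*}]\|^{2}>0$ because a normal nilpotent matrix vanishes; hence there is $c=c(\mu,n)>0$, depending only on the fixed integers $\mu$ and $n$, with $\|[C,C^{*}]\|^{2}\ge c\,\|C\|^{4}$ for all such $C$. Combined with the curvature estimate this gives $\kappa(\xi)\le-c(\mu,n)<0$ for every $\xi$ in $\cT^{hor}_\cblp$ off the zero section, which is the assertion.

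The hard part is the curvature computation of the second paragraph. Unlike the classical case of period domains, $\cbl$ is not homogeneous, so the Griffiths--Schmid Lie-algebra formula for the curvature is unavailable; and the soft ``curvature decreases in holomorphic subbundles'' estimates relating the curvature of $\GIV$ to that of the ambient $\GI$, or to that of the subsheaf $\GIVh$, all point in the wrong direction. One is therefore forced to compute the Chern connection of $\cT_\cblp\cong\GIV$ by hand from the Kodaira--Spencer presentation and to control precisely the correction terms caused by the higher-order poles and by the operator $\cU$, proving that in horizontal directions they do not destroy the negativity. A subsidiary, more routine, task is to fix cleanly the data on $\cblp$ coming from the pure polarized twistors and the dictionary between $\KS|_{\cT^{hor}_\cblp}$ and the Higgs field $\cC$.
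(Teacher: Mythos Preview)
Your final step (nilpotency of the Higgs-type endomorphism plus a compactness argument) is correct and matches the paper's Lemma~\ref{lemMatrices}. But you misidentify the hard part and, in doing so, throw away precisely the tool that makes the proof work.

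You assert that the ``curvature decreases in holomorphic subbundles'' estimate relating $\GIV$ to the ambient $\GI$ ``points in the wrong direction''. It does not. For a holomorphic subbundle $F\subset E$ with the restricted metric one has $h(R^F(\xi,\bar\xi)s,s)\le h(R^E(\xi,\bar\xi)s,s)$ for every $s\in F$; so a negative upper bound for $h^I(R^I(\xi,\bar\xi)\KS(\xi),\KS(\xi))$ already yields the same bound for $h^{IV}(R^{IV}(\xi,\bar\xi)\KS(\xi),\KS(\xi))$. This is exactly the route the paper takes (invoking \cite[lemma (7.14)]{Sch}). Consequently the ``technical heart'' you propose---a direct computation of the Chern connection of $\GIV$ with explicit control of the coupling to the $z^{-2},\dots,z^{-n}$ levels and of the $\cU$-terms---is unnecessary; one never has to look at $R^{IV}$ itself.

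Your proposed route to $R^I$ via a Hitchin identity $R_D(\xi,\bar\xi)=-[\cC_\xi,\cC_\xi^{*}]$ on $\Ks$ is also not available as stated. The $tt^{*}$-equations of \cite{He4,HS1} apply to \emph{variations} of TERP-structures, i.e.\ pole order one in parameter directions; by Lemma~\ref{lemPoleOrder-n} the universal family $\cL$ over $\cblp$ has pole order $n$, so the standard $tt^{*}$-package does not hold over $\cblp$ and the induced connection on $\GIh$ is not simply $\ad(D)$ for a flat-plus-Higgs decomposition. The paper bypasses this by a bare-hands computation of the metric matrix of $h^I$: at the chosen point one fixes a $\tau$-invariant basis $\underline v^{(0)}$ with $P((\underline v^{(0)})^{tr},\underline v^{(0)})=\de_\mu$, extends it holomorphically via Lemma~\ref{lemCanExtBase} to $\underline v=\underline v^{(0)}(\de_\mu+\sum_k z^{-k}C_k)$, and works modulo $(t_1^2,\bar t_1^2,t_j,\bar t_j)_{j>1}$, where horizontality of $\xi=\partial_{t_1}|_t$ forces $C_k\equiv 0$ for $k>1$. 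A single explicit base change to a $\tau$-invariant frame $\underline w$ then gives $H$, and the standard curvature formula in a frame with $H(t)=\de$ yields
\[
h^I\bigl(R^I(\xi,\bar\xi)\KS(\xi),\KS(\xi)\bigr)=-\Tr\bigl([\Delta_1,\overline{\Delta}_1]\cdot\overline{[\Delta_1,\overline{\Delta}_1]}^{tr}\bigr),
\]
with $\Delta_1$ symmetric (from $P(\underline v^{tr},\underline v)=\de_\mu$) and nilpotent (from the $V$-filtration shift). Your compactness argument then finishes.
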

\begin{proof}
First recall a formula for the curvature tensor of the Chern connection on an arbitrary bundle.
Let, as before, $E$ be a holomorphic vector bundle of rank $\mu$ on a complex manifold $M$ and $h:\cE\otimes\overline{\cE}
\rightarrow\CanM$ a positive definite hermitian metric. For a local holomorphic basis
$\underline{e}\in M(1\times\mu,\cE)$, put $H:=
\left(h(\underline{e}^{tr},\underline{e})\right)^{tr}\in M(\mu\times\mu,\CanM)$.
The curvature $R$ is linear, thus $R(\underline{e})=\underline{e}M_R$, where
$M_R\in M(\mu\times\mu,\cA^{1,1}_M)$. It is well known (see, e.g., \cite[lemma 11.4]{CarlStachPeters})
that
$$
M_R=H^{-1}\overline{\partial}\partial H-H^{-1}\overline{\partial}(H) \wedge H^{-1}\partial H.
$$
In particular, for any holomorphic vector field $X\in\cT_M$, we have
$$
M_R(X,\overline{X})=
-H^{-1}(\overline{X}X)(H)+H^{-1}\overline{X}(H) H^{-1}X(H) \in M(\mu\times\mu,\CanM).
$$
If at a point $x\in M$, $H(x)=Id$, then $M_R(X,\overline{X})(x)=\overline{X}(H)(x)X(H)(x)-(\overline{X}X)(H)(x)$,
or, if we write $\xi:=X(x)$, then
\begin{equation}
\label{eqCurv}
M_R(X,\overline{X})(x)=\overline{\xi}(H)\xi(H)-(\overline{X}X)(H)(x).
\end{equation}
This formula will allow a very significant simplification of the calculations.
Let $t\in\cblp$, and let $\xi\in T^{hor}_t(\cblp)$ be any vector with $\xi\neq 0$. Choose
local holomorphic coordinates $(t_1,\ldots,t_{\dim(\cblp)})$ centered at $t$
such that $(\partial_{t_1})_{|t}=\xi$. Although we are interested in
the curvature tensor $R^{IV}$ of the bundle $\cG^{IV}_{|\cblp}$ (which is isomorphic to $\cT_\cblp$),
our first aim is to give an expression for the
matrix $M_R(\partial_{t_1},\overline{\partial}_{t_1})(t)$
which represents $R^{I}(\partial_{t_1},\overline{\partial}_{t_1})(x) \in\mathit{{\cE}\!nd}_\dC(\cG^{I}_{|t})$
with respect to a holomorphic basis in a neighborhood of $t$. This
basis is induced from a holomorphic basis of $\cL$ near $(0,t)$,
which is obtained as follows: choose a basis
$\underline{v}^0\in M(1 \times \mu, \cL_{|\dC\times\{t\}})$ of $\cL_{|\dC\times\{t\}}$
such that $P((\underline{v}^0)^{tr},\underline{v}^0)=\de_\mu$ and
$\tau(\underline{v}^0)=\underline{v}^0$. Then define
$$
\underline{v}:=\underline{v}^0\left(\de_\mu+\sum_{k=1}^nz^{-k}C_k\right) \in M(1\times\mu,\cL)
$$
to be the extension provided by lemma \ref{lemCanExtBase}. It still satisfies
$P(\underline{v}^{tr},\underline{v})=\de_\mu$, but not necessarily $\tau(\underline{v})=\underline{v}$.
Write $\KS(\xi)=\sum_{k,i,j}(\Delta_k)_{ji} \, (z^{-k}\otimes (v^{(0)}_i)^*\otimes v^{(0)}_j)$
(i.e. $\Delta_k\in M(\mu\times\mu,\dC)$), then it follows from $\xi\in T_t^{hor}(\cblp)$ that
$(\Delta_k)=0$ for all $k>1$. Moreover,
as $\kappa(\partial_{t_1})=(\underline{v}\mapsto\underline{v}(\sum_{k=1}^n z^{-k} \partial_{t_1}C_k))$,
we conclude that $\xi(C_1)=\Delta_1$ and $\xi(C_k)=\Delta_k = 0$ for $k>1$.
The matrices $H:=[h^I((\underline{z}\otimes\underline{v}^*\otimes\underline{v})^{tr},
\underline{z}\otimes\underline{v}^*\otimes\underline{v})]^{tr}$
and $M(\partial_{t_1},\overline{\partial}_{t_1})$ are elements of
$M(\mu\times\mu,\CanD)$, so that we conclude from formula \eqref{eqCurv} that
$M(\partial_{t_1},\overline{\partial}_{t_1})(x)$ can obtained from the image of $H$ under the
reduction map $M(\mu\times\mu,\CanD)\twoheadrightarrow M(\mu\times\mu,\cQ)$,
where $\cQ:=\CanD/(t_1^2,\overline{t}_1^2,t_j,\overline{t}_j)_{j>1}$. In order to keep
the notation simple, we still call this reduction $H$. Moreover, it is clear that
this reduced matrix $H$
may be calculated from the image of the basis $\underline{v}$ under the map
$\cK \twoheadrightarrow \cK\otimes\widetilde{\cQ}$, where
$\widetilde{\cQ}:=\cO_\dC\CanD/(t_1^2,\overline{t}_1^2,t_j,\overline{t}_j)_{j>1}$. Again we
denote this image by $\underline{v}$. All subsequent calculations
take place in either $\widetilde{\cQ}$ or $\cQ$. In particular, we have $C_1=t_1\Delta_1$ and $C_k=0$ for $k>1$
in $\cQ$. This implies $\underline{v}=\underline{v}^{(0)}(\de_\mu+z^{-1}C_1)$
and $\de_\mu=P(\underline{v}^{tr},\underline{v})=
(\de_\mu+z^{-1}C_1)^{tr}P((\underline{v}^0)^{tr},\underline{v}^0)(\de_\mu-z^{-1}C_1)=
(\de_\mu+z^{-1}(C_1^{tr}-C_1))$ so that $C^{tr}_1=C_1$.

Consider the base change
given by $\underline{w}:=\underline{v}(\de_\mu+\frac12[\overline{C}_1,C_1]+z\overline{C}_1)$.
It follows from
$P(\underline{v}^{tr},\underline{v})=\de_\mu$, $\overline{C}^{tr}_1=\overline{C}_1$ and
$[\overline{C}_1,C_1]^{tr}=-[\overline{C}_1,C_1]$ that $P(\underline{w}^{tr},\underline{w})=\de_\mu$. Moreover, as
$$
\underline{w}=\underline{v}^{(0)}\left(\de_\mu+z\overline{C}_1+z^{-1}C_1+\frac12(\overline{C}_1C_1+C_1\overline{C}_1)\right),
$$
we also have $\tau(\underline{w})=\underline{w}$.
It is a simple calculation to show that the inverse base change is given by
$$
\underline{v}:=\underline{w}\cdot(\de_\mu-\frac12[\overline{C}_1,C_1]-z\overline{C}_1)
$$
We obtain an induced base change on $\GI$, given by
$\left(\de_\mu-\frac12[\overline{C}_1,C_1]-z\overline{C}_1)^{tr}\right)^{-1}
\otimes(\de_\mu-\frac12[\overline{C}_1,C_1]-z\overline{C}_1)$, i.e.:
$$
\begin{array}{c}
\underline{z}\otimes \underline{v}^* \otimes \underline{v} =
(\underline{z}\otimes \underline{w}^* \otimes \underline{w})\cdot
\left[\de_n\otimes
\left((\de_\mu-\frac12[\overline{C_1},C_1]-z\overline{C_1})^{-1}\right)^{tr}\otimes
(\de_\mu-\frac12[\overline{C_1},C_1]-z\overline{C_1})\right] = \\ \\
\underline{z}\otimes \underline{w}^* \otimes \underline{w} \cdot
\left(\underbrace{\de_n\otimes (\de_\mu\otimes\de_\mu-\frac12(\de_\mu\otimes[\overline{C_1},C_1]+[\overline{C_1},C_1]\otimes\de_\mu))
+N_z\otimes(\overline{C_1}\otimes\de_\mu-\de_\mu\otimes \overline{C_1})}_{=:X}\right),
\end{array}
$$
here
$$
N_z=\begin{pmatrix}0&1&\ldots &&0\\0 & 0 & 1 &\ldots&0\\&&\vdots\\0 &0 & \ldots & 0&1\\0&0& & \ldots& 0\end{pmatrix}.
$$
Now we have
$$
\begin{array}{c}
H=[h^I((\underline{z}\otimes\underline{v}^*\otimes\underline{v})^{tr},
\underline{z}\otimes\underline{v}^*\otimes\underline{v})]^{tr}=
[X^{tr}h^I((\underline{z}\otimes\underline{w}^*\otimes\underline{w})^{tr},\underline{z}\otimes\underline{w}^*\otimes\underline{w})\overline{X}]^{tr}
\\ \\=\overline{X}^{tr}X=
\de_n\otimes(\de_\mu\otimes\de_\mu-\de_\mu\otimes[\overline{C_1},C_1]-[\overline{C_1},C_1]\otimes\de_\mu)
+(N_z)^{tr}\otimes(C_1\otimes\de_\mu-\de_\mu\otimes C_1)\\ \\
+N_z\otimes(\overline{C_1}\otimes\de_\mu-\de_\mu\otimes \overline{C_1})
+\de_{n-1}\otimes(C_1\overline{C_1}\otimes\de_\mu+\de_\mu\otimes C_1\overline{C_1}-\overline{C_1}\otimes C_1 -C_1\otimes\overline{C_1})
\end{array}
$$
where $\de_{n-1}=N_z^{tr}\cdot N_z=\textup{diag}(0,1,\ldots,1)$. The next step is to invoke formula \eqref{eqCurv} to obtain the matrix
$M_R(\partial_{t_1},\overline{\partial}_{t_1})(x)$. Using $C_1=t\Delta_1$, we get
$$
\begin{array}{rcl}
\overline{\xi}(H) & = & N_z\otimes(\overline{\Delta}_1\otimes\de_\mu-\de_\mu\otimes\overline{\Delta}_1), \\ \\
\xi(H) & = & N_z^{tr}\otimes(\Delta_1\otimes\de_\mu-\de_\mu\otimes\Delta_1), \\ \\
(\overline{\partial}_{t_1}\partial_{t_1}(H))(x)
& = & -\de_n\otimes(\de_\mu\otimes[\overline{\Delta}_1,\Delta_1]+
[\overline{\Delta}_1,\Delta_1]\otimes\de_\mu)+\\ \\
& &
\de_{n-1}\otimes(\Delta_1\overline{\Delta_1}\otimes\de_\mu+\de_\mu\otimes \Delta_1\overline{\Delta_1}-\overline{\Delta_1}\otimes \Delta_1 -\Delta_1\otimes\overline{\Delta_1}),
\\ \\
M_R(\xi,\overline{\xi}) & = &
\de_n\otimes(\underbrace{\de_\mu\otimes[\overline{\Delta}_1,\Delta_1]+[\overline{\Delta}_1,\Delta_1]\otimes\de_\mu}_{=:S})\\\\
& & -\de_{n-1}\otimes(\Delta_1\overline{\Delta}_1\otimes\de_\mu+\de_\mu\otimes\Delta_1\overline{\Delta}_1-
\overline{\Delta_1}\otimes \Delta_1 - \Delta_1\otimes\overline{\Delta_1}) \\ \\
& & +\de'_{n-1}\otimes(\underbrace{\overline{\Delta}_1\Delta_1\otimes\de_\mu+\de_\mu\otimes\overline{\Delta}_1\Delta_1
-\overline{\Delta_1}\otimes \Delta_1 - \Delta_1\otimes\overline{\Delta_1}}_{=:R}),
\end{array}
$$
where $\de'_{n-1}=N_z\cdot N_z^{tr}=\diag(1,\ldots,1,0)$.
What we are really interested in is to give a closed formula for the expression
$h^I(R(\xi,\overline{\xi})\KS(\xi),\KS(\xi))$. As $\KS(\xi)=\sum_{i,j}(\Delta_1)_{j,i}(z^{-1}\otimes
(v_i^{(0)})^*\otimes v^{(0)}_j)$ and $h^I((\underline{z}\otimes \underline{v}^* \otimes \underline{v})^{tr},
(\underline{z}\otimes \underline{v}^* \otimes \underline{v}))=
\de_n\otimes h((\underline{v}^*)^{tr},\underline{v}^*)\otimes h(\underline{v}^{tr},\underline{v})$,
we obtain that
$$
\begin{array}{c}
h^I(R(\xi,\overline{\xi})\KS(\xi),\KS(\xi)) = \\ \\
h^I\left(((z^{-1}\otimes(\underline{v}^{(0)})^*\otimes\underline{v}^{(0)})(R+S)(\Delta_1)^{vec})^{tr},(z^{-1}\otimes(\underline{v}^{(0)})^*\otimes\underline{v}^{(0)})(\Delta_1)^{vec}\right)=\\ \\
\left(S\cdot(\Delta_1)^{vec}\right)^{tr}(\de_\mu\otimes\de_\mu)(\overline{\Delta}_1)^{vec}
=
\left(([[\overline{\Delta}_1,\Delta_1],\Delta_1])^{vec}\right)^{tr}(\overline{\Delta}_1)^{vec}
=\textup{Tr}([[\overline{\Delta}_1,\Delta_1],\Delta_1]\cdot \overline{\Delta}_1^{tr}) \\ \\
=-\textup{Tr}([\Delta_1,[\overline{\Delta}_1,\Delta_1]]\cdot \overline{\Delta}_1^{tr})
=-\textup{Tr}([\Delta_1,\overline{\Delta}_1],\overline{[\Delta_1,\overline{\Delta}_1]}^{tr})
\end{array}
$$
The last computation uses formula \eqref{eqMatVecTens} and
the fact that $R\cdot(\Delta_1)^{vec}=[[\Delta_1,\Delta_1],\overline{\Delta}_1]=0$.

It is well known that the curvature decreases on subbundles,
see, e.g., \cite[lemma (7.14)]{Sch}, thus we obtain the following estimate:
$$
h^{IV}(R^{IV}(\xi,\overline{\xi})\KS(\xi),\KS(\xi))
\leq h^I(R^I(\xi,\overline{\xi})\KS(\xi),\KS(\xi)).
$$
This implies that the holomorphic sectional curvature
$\frac{h^{IV}(R^{IV}(\xi,\overline{\xi})\xi,\xi)}{h^2(\xi,\xi)}$
is smaller than or equal to
$$
-\frac{\textup{Tr}([\Delta_1,\overline{\Delta}_1],\overline{[\Delta_1,\overline{\Delta}_1]}^{tr})}{\textup{Tr}(\Delta_1\cdot\overline{\Delta}^{tr}_1)^2}
$$
As $\xi\in T^{hor}_t(\cblp)$, i.e., $\KS(\partial_{t_1})\in\GIV\subset\GIII$,
the morphism $\KS(\partial_{t_1}):\cL\rightarrow z^{-1}\cL/\cL$ respects the $V$-filtration, and
$[z\KS(\partial_{t_1})]\in{\cE}\!nd_{\cO_\cblp}(\cL/z\cL)$ shifts the (induced) $V$-filtration
by one. By definition, $\Delta_1$ is the matrix representing
$[z\KS(\xi)]\in{\cE}\!nd_\dC((\cL/z\cL)_{|(0,t)})$ with respect to the basis $\underline{v}$,
which shows that it is nilpotent.
Lemma \ref{lemMatrices} then proves that the value of
$-\frac{\textup{Tr}([\Delta_1,\overline{\Delta}_1],\overline{[\Delta_1,\overline{\Delta}_1]}^{tr})}{\textup{Tr}(\Delta_1\cdot\overline{\Delta}^{tr}_1)^2}$
and thus of the holomorphic sectional curvature on $T^{hor}_{\cblp}\backslash \{\textup{zero section}\}$ is
bounded from above by a negative real number.

\end{proof}
\begin{lemma}\label{lemMatrices}
Fix $\mu\in \dN$.
\begin{enumerate}
\item
Consider a matrix $A\in M(\mu\times \mu,\dC)$ which is symmetric and
nilpotent. Then
$$
[A,\overline{A}]=0\iff A=0.
$$
\item
The map
\begin{eqnarray*}
\varphi:\big\{A\in M(\mu\times\mu,\dC)\backslash\{0\} &|& A\textup{ is symmetric and nilpotent}\big\}
\longrightarrow\dR,\\ \\
A &\mapsto& \frac{-\tr\left([A,\overline{A}]\cdot \overline{[A,\overline{A}]}^{tr}\right)}
{\tr\left(A\cdot\overline{A}^{tr}\right)^2}
\end{eqnarray*}
is bounded from above by a negative number.
\end{enumerate}
\end{lemma}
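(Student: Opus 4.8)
The plan is to dispatch the two assertions in sequence: the first by a one-line normality argument, the second by a compactness argument that uses the first as its essential input.

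For part 1, I would note that a symmetric matrix $A$ has Hermitian adjoint $A^*=\overline{A}^{tr}=\overline{A}$, so the hypothesis $[A,\overline{A}]=0$ says exactly that $A$ commutes with $A^*$, i.e. that $A$ is normal. A normal matrix is unitarily diagonalizable; if it is moreover nilpotent, all its eigenvalues vanish, hence $A=0$. The reverse implication is trivial.

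For part 2, I would first record that $\varphi$ is invariant under the scaling $A\mapsto\lambda A$ with $\lambda\in\dC^*$: writing $B:=[A,\overline{A}]$, both $B$ and $\tr(A\overline{A}^{tr})$ are linear in $A$ and linear in $\overline{A}$, so under $A\mapsto\lambda A$ the numerator $-\tr(B\overline{B}^{tr})$ and the denominator $\tr(A\overline{A}^{tr})^2$ are both multiplied by $|\lambda|^4$. Hence it suffices to bound $\varphi$ on the set
$$
\Sigma:=\left\{A\in M(\mu\times\mu,\dC)\mid A=A^{tr},\ A^\mu=0,\ \tr(A\overline{A}^{tr})=1\right\},
$$
which is a closed and bounded, hence compact, subset of $M(\mu\times\mu,\dC)\cong\dR^{2\mu^2}$. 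On $\Sigma$ the denominator is identically $1$, so $\varphi|_\Sigma$ is the polynomial (hence continuous) function $A\mapsto-\tr(B\overline{B}^{tr})=-\sum_{i,j}|B_{ij}|^2$. This is $\leq 0$ everywhere on $\Sigma$, and it vanishes only when $B=[A,\overline{A}]=0$, which by part 1 forces $A=0$ --- impossible on $\Sigma$. Thus $\varphi|_\Sigma$ is continuous and everywhere strictly negative on a compact set, so it attains a maximum, which is a negative real number; by the homogeneity just noted this number is an upper bound for $\varphi$ on all nonzero symmetric nilpotent matrices. (If $\mu=1$, then $\Sigma=\emptyset$ and the assertion is vacuous.)

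I do not anticipate a genuine obstacle. The points needing care are the identification $A^*=\overline{A}$ that allows part 1 to invoke normality, and the observation that $\tr(A\overline{A}^{tr})=\sum_{i,j}|A_{ij}|^2$ is strictly positive on $\Sigma$, which is what makes $\varphi$ continuous there. The real content is part 1: it is precisely what prevents the normalized numerator from degenerating to $0$, and hence what turns the compactness argument into a strictly negative bound rather than merely a non-positive one.
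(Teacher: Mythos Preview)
Your proof is correct and follows essentially the same strategy as the paper: part~2 is identical (restrict by homogeneity to the compact slice $\tr(A\overline{A}^{tr})=1$, then use continuity and part~1 to get a strictly negative maximum). For part~1 the paper instead writes $A=\Re(A)+i\Im(A)$ and observes that $[A,\overline{A}]=0$ is equivalent to $[\Re(A),\Im(A)]=0$, whence the two real symmetric matrices are simultaneously diagonalizable and so is $A$; your normality argument ($A^{tr}=A\Rightarrow A^*=\overline{A}$, hence $[A,\overline{A}]=0$ means $A$ is normal, hence unitarily diagonalizable) reaches the same conclusion a bit more directly.
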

\begin{proof}
\begin{enumerate}
\item
$\Re(A)$ and $\Im(A)$ are real symmetric matrices and thus
dia\-go\-nalizable.
$[A,\overline{A}]=0$ is equivalent to $[\Re(A),\Im(A)]=0$
and to the simultaneous diagonalizability of $\Re(A)$ and $\Im(A)$.
In that case also $A$ is diagonalizable. As $A$ is nilpotent, it vanishes.
\item
The image of the map $\varphi$ does not change when we restrict $\varphi$ to the subset
$$
\{A\in M(\mu\times\mu,\dC)\ |\ A\textup{ is symmetric and nilpotent and }
\tr{(A\cdot \overline{A}}^{tr})=1\}.
$$
This set is compact, its image is contained in $\dR^-$,
$\varphi$ is continuous so that the image is compact and therefore
$\textup{Im}(\varphi)$ has a strictly negative upper bound.
\end{enumerate}
\end{proof}

In the remaining part of this section, we outline some
rather direct consequences of the above curvature calculations.
They are close in spirit to the work of Griffiths and Schmid on the
classifying spaces of Hodge structures (\cite{GSch1, GSch2}). The key tool is the
following result. Let us call a holomorphic map $\phi:M\rightarrow \cblp$
\emph{horizontal} if $d\phi(\cT_M)\subset \phi^*\cT^{hor}_\cblp$, where
$d\phi:\cT_M\rightarrow \phi^*\cT_\cblp$ is the derivative
of $\phi$.
\begin{proposition}
\label{propAhlfors}
\begin{enumerate}
\item
Write $\Delta$ for the open unit disc in $\dC$ and let $\phi:\Delta\rightarrow\cblp$ be a horizontal map. Denote by
$$
\omega_\Delta:=\frac{1}{(1-|r|^2)^2}dr\wedge d\overline{r}
$$
the (metric) $(1,1)$-form associated to the Poincar\'e metric
on $\Delta$ and similarly by $\omega_h$ the form associated
to the metric $h$ on $T_\cblp$ defined above. Then the following inequality holds
$$
c\phi^*\omega_h \leq  \omega_\Delta
$$
for some $c\in\dR_{>0}$ meaning that $\omega_\Delta-c\phi^*\omega_h$ is a positive
semi-definite form.
\item
Let now $M$ be any complex manifold and $\phi:M\rightarrow \cblp$ a horizontal map.
Then $\phi$ is distance-decreasing with respect to the (suitably normalized)
distance $d_h$ on $\cblp$ induced by $h$ and the Kobayashi
pseudo-distance on $M$.
\end{enumerate}
\end{proposition}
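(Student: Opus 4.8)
The plan is to derive both parts from the Ahlfors--Schwarz lemma, the negativity of the holomorphic sectional curvature on $\cT^{hor}_\cblp$ established in the theorem above playing the role of the curvature hypothesis.

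For the first part I would begin by pulling back the fundamental form. Since $\phi$ is horizontal, for every $r\in\Delta$ the vector $d\phi(\partial_r)$ lies in the linear space associated to $\cT^{hor}_\cblp$; hence $\phi^*\omega_h$ is a non-negative $(1,1)$-form on $\Delta$ vanishing only at the isolated points where $d\phi$ degenerates. Write $\phi^*\omega_h=\lambda\cdot\tfrac{i}{2}\,dr\wedge d\overline{r}$ with $\lambda\ge 0$. Away from the zeros of $\lambda$ I would compute the Gaussian curvature of this pseudo-metric via the standard identity for $\partial\overline{\partial}\log\|u\|_h^2$, where $u:=d\phi(\partial_r)$ is viewed as a holomorphic section of $\phi^*T_\cblp$ and the connection is the pull-back of the Chern connection of $(T_\cblp,h)$. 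The ``second fundamental form'' term in that identity is non-negative by the Cauchy--Schwarz inequality and may be dropped, so the Gaussian curvature of $\phi^*\omega_h$ is bounded above by the holomorphic sectional curvature of $h$ at $d\phi(\partial_r)$; by horizontality and the theorem this is $\le -B$ for a fixed $B\in\dR_{>0}$, \emph{independent} of $\phi$ and $r$. A short local argument at the (necessarily even-order) zeros of $\lambda$ shows that $\log\lambda$ still satisfies the hypotheses of the Ahlfors--Schwarz lemma, which then gives $\lambda\le\mathrm{const}\cdot(1-|r|^2)^{-2}$, i.e. $c\,\phi^*\omega_h\le\omega_\Delta$ with $c$ depending only on $B$ and the curvature of the Poincar\'e metric.

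For the second part I would first note that horizontality is stable under pre-composition with holomorphic maps: if $f:\Delta\to M$ is holomorphic, then $d(\phi\circ f)=d\phi\circ df$ sends $\cT_\Delta$ into $(\phi\circ f)^*\cT^{hor}_\cblp$, so $\phi\circ f$ is again horizontal. Thus the first part applies to every $\phi\circ f$ and says it is distance-decreasing from the Poincar\'e metric on $\Delta$ to $\sqrt{c}$ times the length metric $d_h$ on $\cblp$. Given $p,q\in M$, one then chains analytic discs in $M$ joining $p$ to $q$, applies this estimate to each disc, and takes the infimum over all chains, obtaining $\sqrt{c}\,d_h(\phi(p),\phi(q))\le$ (Kobayashi pseudo-distance of $p,q$ in $M$); absorbing $\sqrt{c}$ into the normalisation of $d_h$ yields the assertion, where one uses that $d_h$ is a genuine distance because $h$ is positive definite Hermitian. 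The main obstacle is the curvature comparison in the first part: one must check carefully that $\phi^*\omega_h$ really has Gaussian curvature bounded above by the ambient holomorphic sectional curvature --- this is precisely the step where the sign of the dropped term is used --- and that the isolated degeneracies of $d\phi$ do not obstruct the Ahlfors--Schwarz lemma; the preservation of horizontality and the chaining argument are then formal.
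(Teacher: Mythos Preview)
Your proposal is correct and follows exactly the route the paper indicates: the paper's own proof consists of the single sentence ``The proof of the first part is well-known and uses Ahlfors' lemma (see, e.g., \cite[13.4]{CarlStachPeters}). The second part is an immediate consequence.'' You have simply unpacked this reference --- the curvature-decreasing inequality for the pulled-back pseudo-metric, the invocation of the negative upper bound on holomorphic sectional curvature in horizontal directions, and the chaining-of-discs definition of the Kobayashi pseudo-distance --- so there is nothing to compare.
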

\begin{proof}
The proof of the first part is well-known and uses Ahlfors' lemma
(see, e.g., \cite[13.4]{CarlStachPeters}). The second part is an immediate consequence.
\end{proof}
The following rather obvious lemma shows how to apply the above computations
to the study of period mappings.
\begin{lemma}
Let $H$ underly a variation of pure polarized, regular singular TERP-structures on $M$
with constant spectral numbers. Let $\pi:\widetilde{M}\rightarrow M$
be the universal cover. We obtain a period mapping
$$
\phi:\widetilde{M}\longrightarrow \cblp
$$
by associating to $\widetilde{x}\in\widetilde{M}$ the TERP-structure
$H_{|\dC\times\{\pi(x)\}}\in\cblp$. Then we have
$d\phi(\cT_{\widetilde{M}})\subset\phi^*\cT^{hor}_\cblp$, i.e.,
$\phi$ is horizontal.
\end{lemma}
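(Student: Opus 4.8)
The assertion is that for every local holomorphic vector field $X$ on $\widetilde{M}$ the class $\KS(d\phi(X))$ — here $\KS$ stands for the pullback $\phi^*\KS$ of the Kodaira--Spencer isomorphism of lemma \ref{lemKS}, so that a priori $\KS(d\phi(X))$ is a section of $\phi^*\GIV\subset\phi^*\GI$ — in fact lies in $\phi^*\GIVh$, where $\GIVh=\GIV\cap\GIh$ and $\GIh$ is the image of ${\cH\!}om_\cR(\cL,z^{-1}\cL)$ in $\GI$. Since $\KS(d\phi(X))$ already lies in $\phi^*\GIV$, it is enough to show that the endomorphism $v\mapsto\nabla_{d\phi(X)}v$ maps $\cL$ into $z^{-1}\cL$ rather than merely into $z^{-n}\cL$, i.e.\ that $\KS(d\phi(X))$ lies in the image of ${\cH\!}om_\cR(\cL,z^{-1}\cL)$.

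The plan is first to identify $\phi^*$ of the universal family with the given variation. By the construction of the complex manifold structure on $\cbl$ and of the universal family $\cL$ (\cite[Ch.\ 5]{He2}, see also \cite[proof of theorem 12.8]{He3}), in which $\cbl$ is obtained by gluing the local families over $\dC\times U$ built from the bases of \cite[lemma 5.2 to theorem 5.6]{He2}, the period map $\phi$ is holomorphic and there is a canonical isomorphism $\phi^*\cL\cong(\id_\dC\times\pi)^*H$ of $\cO_{\dC\times\widetilde{M}}$-modules, compatible with the connections (the pullback connection on $\phi^*\cL$, the pullback of $\nabla$ on the other side), with the pairing $P$ and with the real structure. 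Under this identification the naturality of pullback connections gives, for a local section $v$ of $\cL$, the relation $\nabla_X(\phi^*v)=\phi^*(\nabla_{d\phi(X)}v)$, so that the endomorphism $v\mapsto\nabla_{d\phi(X)}v$ representing $\KS(d\phi(X))$ becomes the covariant derivative of the pulled-back variation in the direction $X$.

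It now suffices to invoke condition 1 in the definition of a variation of TERP-structures: $\nabla$ has a pole of type one along $\{0\}\times\widetilde{M}$, that is $z\nabla_T\cH\subset\cH$ for every $T\in p^{-1}\cT_{\widetilde{M}}$. For $T=X$ this reads $\nabla_X\cH\subset z^{-1}\cH$, hence $v\mapsto\nabla_{d\phi(X)}v$ maps $\cL$ into $z^{-1}\cL$. Therefore $\KS(d\phi(X))\in\phi^*\GIh$, and being also a section of $\phi^*\GIV$ it lies in $\phi^*\GIVh$; equivalently $d\phi(X)\in\phi^*\cT^{hor}_\cblp$, which is exactly horizontality. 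The main (and essentially only) point requiring care is the compatibility of the isomorphism $\phi^*\cL\cong(\id_\dC\times\pi)^*H$ with the connections — this is what makes the pole-of-type-one axiom on the variation translate into horizontality in the classifying space — and it rests entirely on the explicit description of $\cbl$ and its universal family in \cite{He2}; granting that, horizontality is immediate, with no curvature estimate or computation involved.
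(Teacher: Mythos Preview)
Your argument is correct and matches the paper's proof essentially line for line: identify the pullback of the universal family $(\cL,\nabla)$ under $\id\times\phi$ with the given variation, then use the pole-of-type-one axiom $z\nabla_X\cH\subset\cH$ to force $\KS(d\phi(X))$ into the image of ${\cH\!}om_\cR(\cL,z^{-1}\cL)$, hence into $\GIVh$. The paper is slightly terser about the compatibility of connections under pullback, but the content is the same.
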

\begin{proof}
The pullback of the universal bundle $(\cL,\nabla)$ under the map $id\times\phi:\dC\times \widetilde{M}\rightarrow \dC\times\cbl$
is isomorphic to $(\cH,\nabla)$. By definition, for a \emph{variation} of TERP-structures, the sheaf $\cH$ is stable
under $z\nabla_X$ for any $X\in (p')^{-1}\cT_{\widetilde{M}}$ (where $p':\dC\times\widetilde{M}
\rightarrow \widetilde{M}$), and not only under $z^n\nabla_X$ as it is the case for $\cL$. Therefore,
$$
(id\times\phi)^*(z\nabla_{d\phi(X)})\cL = (z\nabla_X)(id\times\phi)^*\cL\subset(id\times\phi)^*\cL.
$$
This implies that $\KS(d\phi(\cT_{\widetilde{M}}))$ is contained
in ${\cH}\!om_\cR(\cL,z^{-1}\cL/\cL)$, so that by definition
$\mathit{Im}(d\phi)\subset\phi^*\cT^{hor}_\cblp$.
\end{proof}

As an example of possible applications we give the following rigidity result similar to the
one for variations of Hodge structures.
\begin{corollary}
\label{corRigidTERP}
Let $\TERP$ be a variation of pure
polarized regular singular TERP-structures on $\dC^m$ with constant spectral numbers. Then
the variation is trivial, i.e., the corresponding map
$\phi:\dC^m\rightarrow \cblp$ is constant or, in other words,
$\cH$ is stable under $\nabla$.
\end{corollary}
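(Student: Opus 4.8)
The plan is to combine the negativity of the holomorphic sectional curvature in horizontal directions (the main theorem of this section) with the Ahlfors--Schwarz estimates of Proposition~\ref{propAhlfors}, exactly as in the classical rigidity theorem for variations of Hodge structure. Since $\dC^m$ is simply connected, the universal cover in the preceding lemma is $\dC^m$ itself, so the period map $\phi\colon\dC^m\to\cblp$ is globally defined, and by that lemma it is horizontal, i.e. $d\phi(\cT_{\dC^m})\subset\phi^*\cT^{hor}_\cblp$.

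First I would recall that the Kobayashi pseudo-distance of $\dC^m$ vanishes identically: any two points $x,y$ lie on an affine complex line $\psi\colon\dC\to\dC^m$ with $\psi(0)=x$ and $\psi(t_1)=y$ for some $t_1$, and for each $R>0$ the map $r\mapsto\psi(Rr)$ sends the unit disc into this line, sending a point of modulus $|t_1|/R$ to $y$; letting $R\to\infty$ gives $d^{Kob}_{\dC^m}(x,y)=0$. Now Proposition~\ref{propAhlfors}(2) applied to the horizontal map $\phi$ shows that $\phi$ is distance-decreasing from this pseudo-distance to the distance $d_h$ on $\cblp$, hence $d_h(\phi(x),\phi(y))\le d^{Kob}_{\dC^m}(x,y)=0$ for all $x,y$. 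Since $h$ is a positive definite hermitian metric, $d_h$ separates points of $\cblp$ (any curve joining two distinct points must leave a small coordinate ball around one of them, on whose closure $h$ is comparable to a Euclidean metric, so such a curve has $d_h$-length bounded below). Therefore $\phi(x)=\phi(y)$ for all $x,y$, i.e. $\phi$ is constant. One could equivalently argue line by line: restricting $\phi$ to each complex line $\dC\hookrightarrow\dC^m$ and using the inequality $c\,\phi^*\omega_h\le\omega_\Delta$ of Proposition~\ref{propAhlfors}(1) together with the fact that the Poincar\'e form of the disc of radius $R$ tends to $0$ as $R\to\infty$, one gets $\phi^*\omega_h\equiv 0$, hence $d\phi=0$.

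It remains to translate the constancy of $\phi$ into the stated assertion about $\nabla$. Constancy gives $d\phi=0$, hence $\KS(d\phi(X))=0$ in $\GI$ for every $X\in\cT_{\dC^m}$ (recall $\KS$ is injective by Lemma~\ref{lemKS}). By the last lemma, $(\id\times\phi)^*(\cL,\nabla)\cong(\cH,\nabla)$ \emph{as bundles with connection}, so the vanishing of the class of $v\mapsto\nabla_{d\phi(X)}v$ in $\GI=\GIw/{\cE}\!nd_\cR(\cL)$ pulls back to $\nabla_X\cH\subset\cH$ for every $X\in\cT_{\dC^m}$. Together with the always-valid inclusion $z^2\nabla_z\cH\subset\cH$, this is precisely the statement that $\cH$ is stable under $\nabla$; equivalently $\cH$ is the $\nabla$-parallel (in the $\dC^m$-directions) extension of the fixed Brieskorn lattice $\cH_{|\dC\times\{0\}}$, so the variation is trivial.

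The argument is essentially formal once the main theorem and Proposition~\ref{propAhlfors} are in hand; there is no real analytic obstacle here. The one point deserving care is the equivalence between constancy of the period map and $\nabla$-stability of $\cH$, which relies on the injectivity of $\KS$ and on the compatibility of the isomorphism $(\id\times\phi)^*\cL\cong\cH$ with connections, both recorded earlier; and on the (elementary but essential) observation that the positive definiteness of $h$ forces $d_h$ to be a genuine distance so that a map into $\cblp$ which contracts the trivial Kobayashi pseudo-distance of $\dC^m$ is necessarily constant.
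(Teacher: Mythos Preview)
Your proof is correct and follows essentially the same approach as the paper: horizontality of the period map via the preceding lemma, the distance-decreasing property from Proposition~\ref{propAhlfors}(2), vanishing of the Kobayashi pseudo-distance on $\dC^m$, and the fact that $d_h$ is a genuine distance force $\phi$ to be constant. You have simply supplied more detail than the paper does (the explicit reason $d_{\dC^m}^{Kob}=0$, why $d_h$ separates points, and the translation of constancy into $\nabla$-stability of $\cH$), but the argument is the same.
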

\begin{proof}
The last lemma and the second point of proposition \ref{propAhlfors} show that the period map
$\phi:\dC^m\rightarrow \cblp$ satisfies $d_{\dC^m}(x,y)\geq
d_h(\phi(x),\phi(y))$, where $d_{\dC^m}$ is the Kobayashi pseudo-distance
on $\dC^m$ and $x,y\in\dC^m$. It is known that $d_{\dC^m}=0$, on the other hand, $d_h$ is a true distance,
so that $\phi$ is necessarily constant.
\end{proof}

Let us finish this paper by pointing out that the above construction
has an a priori unpleasant feature: the metric space $\cblp$ is not complete
in general. We will give a concrete example showing this phenomenon. We will not carry
out all details of the computations which are rather lengthy.

Consider the following topological data: Let $H^\infty_\dR$ be
a three-dimensional real vector space, $H^\infty:=H^\infty_\dR\otimes \dC$ its complexification
and choose a basis $H^\infty=\oplus_{i=1}^3\dC A_i$ such that $\overline{A}_1=A_3$
and $A_2\in H^\infty_\dR$.
Moreover, choose a real number $\alpha_1\in(-3/2,-1)$, put $\alpha_2:=0$, $\alpha_3:=-\alpha_1$
and let $M\in\mathit{Aut}(H^\infty_\dC)$ be given
by $M(\underline{A})=\underline{A}\cdot\diag(\lambda_1,\lambda_2,\lambda_3)$
where $\underline{A}:=(A_1,A_2,A_3)$ and $\lambda_i:=e^{-2\pi i \alpha_i}$.
Putting
$$
\{0\}=F_0^2 \subsetneq
F_0^1:=\dC A_1 \subsetneq
F_0^0 := \dC A_1 \oplus \dC A_2 =
F_0^{-1} \subsetneq
F_0^{-2}:= H^\infty
$$
defines a sum of pure Hodge structures of weights $0$ and $-1$ on $H^\infty_1$ and $H^\infty_{\neq 1}$.
A polarizing form is defined by
$$
S(\underline{A}^{tr},\underline{A}):=
\begin{pmatrix}
0 & 0 & \gamma \\
0 & 1 & 0 \\
-\gamma & 0 &0
\end{pmatrix},
$$
where $\gamma:=\frac{-1}{2\pi i}\Gamma(\alpha_1+2)\Gamma(\alpha_3-1)$. In particular,
we have for $p=1$
$$
i^{p-(-1-p)}S(A_1,A_3)= (-1)iS(A_1,A_3)= \frac{\Gamma(\alpha_1+2)\Gamma(\alpha_3-1)}{2\pi} >0
$$
and for $p=0$
$$
i^{p-(-p)}S(A_2,A_2)=S(A_2,A_2)>0
$$
so that $F_0^\bullet$ indeed induces a pure polarized Hodge structure of weight $-1$ on $H^\infty_{\neq 1}=\dC A_1\oplus\dC A_2$
and a pure polarized Hodge structure of weight $0$ on $H^\infty_1=\dC A_2$.
As $M$ is semi-simple and its eigenspaces are one-dimensional, we have
$\DPMHS=\DcPMHS=\DPHS=\DcPHS=\{F_0^\bullet\}$ and $F^\bullet_0=\widetilde{F}^\bullet_0$.

Let $(H',H_\dR',\nabla)$ be the flat holomorphic bundle on $\dC^*\times\dC$ with real flat subbundle corresponding
to $(H^\infty,H^\infty_\dR,M)$, and put $s_i:=z^{\alpha_i}A_i\in\cH'$.
According to \cite[formula (5.3), (5.4)]{HS1}, the pairing
$P:\cH'\otimes j^*\cH'\rightarrow \cO_{\dC^*\times \dC}$ is determined by the above chosen $S$, namely,
$P(\underline{s}^{tr}, \underline{s}):=(\delta_{i+j,4})_{i,j\in\{1,\ldots,3\}}$.

It follows from the construction in \cite[section 5]{He2} that
the classifying space $D_\mathit{BL}=\cbl$ associated with the given topological
data and the spectrum $\alpha_1,\alpha_2,\alpha_3$ is $\cbl\cong\dC^2=\Spec\dC[r,t]$, with
the universal family of Brieskorn lattices given by $\cH=\oplus_{i=1}^3\cO_{\dC^3}v_i$,
where
$$
\begin{array}{rcl}
v_1 & := & s_1 + r z^{-1} s_2 + \frac{r^2}{2} z^{-2} s_3 + t z^{-1}s_3,\\
v_2 & := & s_2 + r z^{-1} s_3,\\
v_3 & := & s_3.
\end{array}
$$
$\widehat{\cH}$ is pure outside of the real-analytic hypersurface
$(1-\rho)^4-\theta=0$, where $\rho=\frac12r\overline{r}$ and $\theta=t\overline{t}$.

The complement has three components. $\widehat{\cH}$ is polarized on two of them,
those which contain $\{(r,0)\ |\ |r|<\sqrt{2}\}$ and $\{(r,0)\ |\ |r|>\sqrt{2}\}$, respectively.
On the third component the metric on $p_*\widehat \cH$ has signature $(+,-,-)$.
So in this example $\cblp$ has two connected components, one of them is bounded while the other is not.

If we restrict the metric $h$ on $\cT_{\cblp}$ to the tangent space of $\{(r,0)\ |\ |r|\neq \sqrt{2}\}$, then it is
given by
$$
h(\partial_r,\partial_r) = 2\frac{1+\rho^2}{(1-\rho)^4}.
$$

From this it is directly evident that the distance defined by $h$ on the unbounded component of $\cblp$ cannot be complete,
as we have
$$
h(\partial_{r^{-1}},\partial_{r^{-1}})=h(-r^2\partial_r,-r^2\partial_r)
=8\rho^2\frac{1+\rho^2}{(1-\rho)^4}
\stackrel{r\rightarrow \infty}{\longrightarrow} 8\neq\infty.
$$
Comparing the situation to the one for classifying spaces of Hodge
structure (where the distance induced by the Hodge metric on $\DPHS$ is known
to be complete due to the homogeneity of $\DPHS$), it is clear that one needs
to have a complete metric space as a possible target for period maps for
variations of regular singular TERP-structures. We are able to construct such a space, it is
in fact a partial compactification of $\cblp$, on which the metric can be extended and
becomes complete. However, this construction presents a number of technical
difficulties and is somewhat beyond the scope of the present work.
We will treat this and related questions in a subsequent paper.

\bibliographystyle{amsalpha}

\begin{thebibliography}{CMSP03}

\bibitem[CMSP03]{CarlStachPeters}
James Carlson, Stefan M{\"u}ller-Stach, and Chris Peters, \emph{Period mappings
  and period domains}, Cambridge Studies in Advanced Mathematics, vol.~85,
  Cambridge University Press, Cambridge, 2003.

\bibitem[CV91]{CV1}
Sergio Cecotti and Cumrun Vafa, \emph{Topological--anti-topological fusion},
  Nuclear Phys. B \textbf{367} (1991), no.~2, 359--461.

\bibitem[CV93]{CV2}
\bysame, \emph{On classification of {$N=2$} supersymmetric theories}, Comm.
  Math. Phys. \textbf{158} (1993), no.~3, 569--644.

\bibitem[Del71]{De3}
Pierre Deligne, \emph{{Travaux de Griffiths.}}, {S\'eminaire Bourbaki. Vol.
  1969/70: Expos\'es 364--381} (Berlin), Lecture Notes in Mathematics, Vol.
  180, no. 376, Springer-Verlag, 1971, pp.~213--237.

\bibitem[Dub93]{Du}
Boris Dubrovin, \emph{Geometry and integrability of topological-antitopological
  fusion}, Comm. Math. Phys. \textbf{152} (1993), no.~3, 539--564.

\bibitem[GS69]{GSch1}
Phillip Griffiths and Wilfried Schmid, \emph{Locally homogeneous complex
  manifolds}, Acta Math. \textbf{123} (1969), 253--302.

\bibitem[GS75]{GSch2}
\bysame, \emph{Recent developments in {H}odge theory: a discussion of
  techniques and results}, Discrete subgroups of Lie groups and applicatons to
  moduli (Internat. Colloq., Bombay, 1973), Oxford Univ. Press, 1975,
  pp.~31--127.

\bibitem[Her99]{He2}
Claus Hertling, \emph{Classifying spaces for polarized mixed {H}odge structures
  and for {B}rieskorn lattices}, Compositio Math. \textbf{116} (1999), no.~1,
  1--37.

\bibitem[Her02]{He3}
\bysame, \emph{Frobenius manifolds and moduli spaces for singularities},
  Cambridge Tracts in Mathematics, vol. 151, Cambridge University Press,
  Cambridge, 2002.

\bibitem[Her03]{He4}
\bysame, \emph{{$tt\sp *$} geometry, {F}robenius manifolds, their connections,
  and the construction for singularities}, J. Reine Angew. Math. \textbf{555}
  (2003), 77--161.

\bibitem[HS07]{HS1}
Claus Hertling and Christian Sevenheck, \emph{Nilpotent orbits of a
  generalization of {H}odge structures}, J. Reine Angew. Math. \textbf{609}
  (2007), 23--80.

\bibitem[Iri07]{Ir}
Hiroshi Iritani, \emph{{R}eal and integral structures in quantum cohomology
  {I}: toric orbifolds}, Preprint math.AG/0712.2204, 2007.

\bibitem[Mal83]{Mal2}
B.~Malgrange, \emph{Sur les d\'eformations isomonodromiques. {I}.
  {S}ingularit\'es r\'eguli\`eres}, Mathematics and Physics (Paris, 1979/1982),
  Progr. Math., vol.~37, Birkh\"auser Boston, Boston, MA, 1983, pp.~401--426.

\bibitem[Sab08]{Sa8}
Claude Sabbah, \emph{{F}ourier-{L}aplace transform of a variation of polarized
  complex {H}odge structure.}, J. Reine Angew. Math. \textbf{621} (2008),
  123--158.

\bibitem[Sai89]{SM}
Morihiko Saito, \emph{On the structure of {B}rieskorn lattice}, Ann. Inst.
  Fourier (Grenoble) \textbf{39} (1989), no.~1, 27--72.

\bibitem[Sch73]{Sch}
Wilfried Schmid, \emph{Variation of {H}odge structure: the singularities of the
  period mapping}, Invent. Math. \textbf{22} (1973), 211--319.

\bibitem[Sim97]{Si5}
Carlos~T. Simpson, \emph{Mixed twistor structures}, Preprint math.AG/9705006,
  1997.

\end{thebibliography}
\providecommand{\bysame}{\leavevmode\hbox to3em{\hrulefill}\thinspace}
\providecommand{\MR}{\relax\ifhmode\unskip\space\fi MR }
\providecommand{\MRhref}[2]{%
  \href{http://www.ams.org/mathscinet-getitem?mr=#1}{#2}
}
\providecommand{\href}[2]{#2}

\vspace*{1cm}

\nd
Lehrstuhl f\"ur Mathematik VI \\
Institut f\"ur Mathematik\\
Universit\"at Mannheim,
A 5, 6 \\
68131 Mannheim\\
Germany

\vspace*{1cm}

\nd
hertling@math.uni-mannheim.de\\
sevenheck@math.uni-mannheim.de

\end{document}